\newtheorem{theorem}{Theorem}
\newtheorem{lemma}{Lemma}
\newtheorem{corollary}{Corollary}
\theoremstyle{definition}
\newtheorem{remark}{Remark}
\newtheorem{example}{Example}
\newcommand{\RR}{\mathbb{R}}
\newcommand{\CC}{\mathbb{C}}
\newcommand{\MM}{\textnormal{\textbf{M}}}
\renewcommand{\maketitle}{
\pagestyle{plain}
\vspace*{\baselineskip}
\begin{center}
\MakeUppercase{\small Philip Saltenberger} \\ \emph{Institute for Numerical Analysis, TU Braunschweig} \\ \emph{Braunschweig, Germany} \\ (E-Mail: philip.saltenberger@tu-bs.de)

\vspace*{3\baselineskip}

\MakeUppercase{\large \@title}
\end{center} \vskip-\baselineskip
}
\titleformat{\section}[hang]{\Large\scshape}{\thesection. }{0pt}{\centering}[]
\titleformat{\subsection}[hang]{\large\scshape}{\thesubsection. }{0pt}{\centering}[]
\titleformat{\subsubsection}[hang]{\scshape}{\thesubsubsection. }{0pt}{\centering}[]
\title{Structure-preserving eigenvalue modification of symplectic matrices and matrix pencils}
\begin{document}
\maketitle
\begin{abstract}
A famous theorem by R. Brauer shows how to modify a single eigenvalue of a matrix $A$ by a rank-one update without changing the remaining eigenvalues. A generalization of this theorem (due to R. Rado) is used to change a pair of eigenvalues $\lambda, 1/\lambda$ of a symplectic matrix $S$ in a structure-preserving way to desired target values $\mu, 1/\mu$. Universal bounds on the relative distance between $S$ and the newly constructed symplectic matrix $\hat{S}$ with modified spectrum are given. The eigenvalues Segre characteristics of $\hat{S}$ are related to those of $S$ and a statement on the eigenvalue condition numbers of $\hat{S}$ is derived. The main results are extended to matrix pencils.
\end{abstract}

\section{Introduction} \label{sec:intro}

In numerical linear algebra and matrix analysis one occasionally encounters the necessity of modifying special eigenvalues of a matrix without altering its remaining eigenvalues. Techniques for changing certain eigenvalues of a matrix have, for instance, been applied to solve nonnegative inverse eigenvalue problems \cite{Perfect1955, Soto2006} or, in form of deflation methods, to remove dominant eigenvalues in eigenvalue computations \cite[Sec.\,4.2]{saad_evals}. Furthermore, the task of modifying eigenvalues of matrices is of interest in stability and feedback of linear systems \cite[\S\,25]{delchamps}, \cite[Sec.\,2.3]{Bru2012} or for passivity and eigenvalue assignment in control design \cite{alex}. One basic result on how a single eigenvalue of a matrix may be changed without modifying any other eigenvalues is due to R. Brauer and can be found in \cite[Sec.\,1]{Bru2012}, \cite{Soto2006}.

\begin{theorem}[Brauer] \label{thm:brauer}
Let $A \in \MM_{n}(\CC)$ have eigenvalues $\lambda_1, \ldots , \lambda_n \in \CC$ and let $x_1 \in \CC^n$ be an eigenvector for $\lambda_1$. Then, for any $c \in \CC^n$, the matrix $\hat{A} = A + x_1c^T \in \MM_n(\CC)$ has the eigenvalues $\lambda_1 + c^Tx_1, \lambda_2, \ldots , \lambda_n$.
\end{theorem}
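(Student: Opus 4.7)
The plan is to reduce the problem to a visibly triangular computation by applying a Schur-type unitary similarity that simultaneously exposes the eigenvalue $\lambda_1$ and its eigenvector $x_1$. After rescaling $x_1$ to have unit norm (which only rescales $c$ in the product $c^T x_1$), I would extend $x_1$ to an orthonormal basis of $\CC^n$ and then upper-triangularize the trailing $(n-1)\times(n-1)$ principal block via the standard Schur decomposition. This produces a unitary $U$ with $Ue_1 = x_1$ such that $T := U^* A U$ is upper triangular with diagonal $(\lambda_1,\lambda_2,\ldots,\lambda_n)$, the ordering being arranged so that $\lambda_1$ sits in position $(1,1)$.

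Next I would transform the updated matrix by the same $U$. The rank-one perturbation becomes
\[
U^*(x_1 c^T)U \;=\; (U^*x_1)(c^TU) \;=\; e_1(c^TU),
\]
so the only nonzero entries of this correction lie in the first row. Adding it to $T$ therefore yields an upper triangular matrix $\hat{T} := T + e_1(c^TU)$, whose diagonal entries in positions $2,\ldots,n$ coincide with those of $T$, namely $\lambda_2,\ldots,\lambda_n$. For the leading entry I would compute
\[
\hat{T}_{11} \;=\; \lambda_1 + (c^TU)e_1 \;=\; \lambda_1 + c^T(Ue_1) \;=\; \lambda_1 + c^Tx_1.
\]
Since $\hat{T}$ is similar to $\hat{A}$ and upper triangular, reading its spectrum off the diagonal delivers the claim.

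There is no serious obstacle; the only ingredient one needs beyond bookkeeping is the existence of a Schur form in which a prescribed eigenvector appears as the first column, which is immediate from the standard construction. An entirely alternative route runs through the characteristic polynomial via the matrix determinant lemma,
\[
\det(\lambda I - \hat{A}) \;=\; \det(\lambda I - A)\bigl(1 - c^T(\lambda I - A)^{-1}x_1\bigr),
\]
combined with the identity $(\lambda I - A)^{-1}x_1 = (\lambda-\lambda_1)^{-1}x_1$. Either approach succeeds because the eigenvector hypothesis is precisely what makes the rank-one update decouple from the invariant complement of $\spann\{x_1\}$.
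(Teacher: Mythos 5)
The paper does not supply its own proof of Brauer's theorem; it states it as a known result and cites \cite{Bru2012} and \cite{Soto2006}, so there is no argument in the text to compare yours against. Your proposal is correct on its own merits: the Schur-form argument is sound (the normalization remark is exactly what is needed so that a unitary $U$ with $Ue_1 = x_1$ exists, and the rank-one correction $e_1(c^TU)$ indeed only touches the first row, preserving upper triangularity and the trailing diagonal), and the alternative via the matrix determinant lemma together with $(\lambda I - A)^{-1}x_1 = (\lambda-\lambda_1)^{-1}x_1$ yields the factorization $\det(\lambda I - \hat{A}) = (\lambda - \lambda_1 - c^Tx_1)\prod_{j\geq 2}(\lambda - \lambda_j)$ directly. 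Either route is a standard and complete proof of the theorem.
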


This work is concerned with the purposive change of certain eigenvalues of matrices with symplectic structure. A complex $2n \times 2n$ matrix $S \in \MM_{2n}(\CC)$ is called \textit{symplectic}, if\footnote{Here and in the following, $^T$ denotes the transpose of a (maybe complex) matrix or vector, not its conjugate transpose.}
\begin{equation} S^TJ_{2n}S = J_{2n} =: J, \quad \textnormal{where} \; J_{2n} = \begin{bmatrix} 0_{n \times n} & I_n \\ -I_n & 0_{n \times n} \end{bmatrix}. \label{equ:def_symplectic} \end{equation}
 Defining $S^\star := J^TS^TJ$, we see that \eqref{equ:def_symplectic} is equivalent to $S^\star S = I_{2n}$. Therefore, a symplectic matrix $S$ is always nonsingular and $S^\star = S^{-1}$. In consequence, as $S^\star$ is similar\footnote{By definition, $S^\star$ is similar to $S^T$ and by the Taussky-Zassenhaus Theorem \cite{taussky}, $S^T$ is similar to $S$.} to $S$, the eigenvalues of a symplectic matrices arise in pairs $\lambda_j, \lambda_j^{-1}$, $j=1, \ldots , n$, where $\lambda_j$ and $\lambda_j^{-1}$ have the same Segre characteristic. Recall that for an eigenvalue $\lambda$ of $S$, its Segre characteristic is the sequence of sizes of the Jordan blocks of $S$ with eigenvalue $\lambda$ in non-increasing order \cite{shapiro}. We denote the Segre characteristic of an eigenvalue by $((\cdot, \ldots , \cdot))$. It is now immediate that Theorem \ref{thm:brauer} can in general not be used for a structure-preserving, symplectic change of eigenvalues. In fact, for a structure-preserving eigenvalue modification, the change of $\lambda_j$ and $\lambda_j^{-1}$ must take place simultaneously.

Without any structure-preservation in mind, changing two (or more) eigenvalues simultaneously is possible with the following generalization of Theorem \ref{thm:brauer} attributed to R. Rado. It can be found in \cite{Perfect1955}, see also \cite[Sec.\,3]{Bru2012}.

\begin{theorem}[Rado] \label{thm:rado}
Let $A \in \MM_{n}(\CC)$ have eigenvalues $\lambda_1, \ldots , \lambda_n \in \CC$ and let $x_1, \ldots , x_k \in \CC^n$ be linearly independent eigenvectors for $\lambda_1, \ldots , \lambda_k$. Set $X = [ \; x_1 \; \cdots \; x_k \; ] \in \MM_{n \times k}(\CC)$. Then, for any matrix $C \in \MM_{n \times k}(\CC)$, the matrix $\hat{A} = A + XC^T$ has the eigenvalues $\mu_1, \ldots , \mu_k, \lambda_{k+1}, \ldots , \lambda_n$, where $\mu_j$, $j=1, \ldots , k$, are the eigenvalues of $\Omega = \textnormal{diag}(\lambda_1, \ldots , \lambda_n) + C^TX$.
\end{theorem}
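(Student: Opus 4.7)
The plan is to reduce $\hat{A}$ to block upper triangular form by a well-chosen similarity transformation and then read off the spectrum from the diagonal blocks. First I would extend the linearly independent eigenvectors $x_1, \ldots, x_k$ to a basis of $\CC^n$ by selecting any matrix $Y \in \MM_{n \times (n-k)}(\CC)$ such that $P := [\, X \; Y \,] \in \MM_n(\CC)$ is nonsingular.

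Because $AX = XD$ with $D := \diag(\lambda_1, \ldots, \lambda_k)$, the first $k$ columns of $AP$ agree with $XD$, so the similarity transform has the block upper triangular form
\begin{equation*}
P^{-1} A P \;=\; \begin{bmatrix} D & M \\ 0 & B \end{bmatrix},
\end{equation*}
for some $M \in \MM_{k \times (n-k)}(\CC)$ and $B \in \MM_{n-k}(\CC)$. Comparing the characteristic polynomial $\det(zI - A) = \det(zI_k - D)\det(zI_{n-k} - B)$ with the hypothesis on the spectrum of $A$ identifies the eigenvalues of $B$ as $\lambda_{k+1}, \ldots, \lambda_n$.

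The crucial observation is that, since $X$ consists of the first $k$ columns of $P$, we have $P^{-1} X = \bigl[\, I_k \; 0 \,\bigr]^T$. Therefore, applying the same similarity to the rank-$k$ update yields
\begin{equation*}
P^{-1}(XC^T)P \;=\; \begin{bmatrix} I_k \\ 0 \end{bmatrix} C^T [\, X \; Y \,] \;=\; \begin{bmatrix} C^TX & C^TY \\ 0 & 0 \end{bmatrix},
\end{equation*}
so that
\begin{equation*}
P^{-1} \hat{A} P \;=\; \begin{bmatrix} D + C^TX & M + C^TY \\ 0 & B \end{bmatrix} \;=\; \begin{bmatrix} \Omega & \star \\ 0 & B \end{bmatrix}.
\end{equation*}

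The eigenvalues of this block upper triangular matrix are the union of those of its diagonal blocks, namely $\mu_1, \ldots, \mu_k$ (by definition of the $\mu_j$ as eigenvalues of $\Omega$) together with $\lambda_{k+1}, \ldots, \lambda_n$; similarity then transfers this list to $\hat{A}$. There is no real obstacle here beyond the correct dimensional bookkeeping: the proof amounts to recognizing that the eigenvector condition $AX = XD$ together with the choice $P = [\,X\;Y\,]$ decouples the problem so that the perturbation $XC^T$ affects only the upper left block of the similarity.
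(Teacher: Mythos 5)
Your proof is correct, and the argument is essentially the standard one. The paper does not supply its own proof of Rado's theorem; it simply cites it from Perfect (1955) and Bru et al. (2012), so there is no in-text proof to compare against. The block-triangularization argument you give — completing $X$ to a nonsingular $P = [\,X\ Y\,]$, using $P^{-1}X = \bigl[\,I_k\ 0\,\bigr]^T$ so that $P^{-1}AP$ and $P^{-1}\hat{A}P$ are both block upper triangular with the same lower-right block $B$, and then reading off the spectrum from the diagonal blocks — is the classical proof appearing in those references. One small remark: the paper's statement has a typographical slip, writing $\diag(\lambda_1,\ldots,\lambda_n)$ where it must mean $\diag(\lambda_1,\ldots,\lambda_k)$ (otherwise the dimensions in $\Omega = \diag(\cdot) + C^TX$ do not match); you silently and correctly repair this by working with $D = \diag(\lambda_1,\ldots,\lambda_k)$. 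The only step worth spelling out a bit more is the identification of the eigenvalues of $B$: you should say explicitly that $\det(zI_n - A) = \prod_{j=1}^{n}(z-\lambda_j)$ and $\det(zI_k - D) = \prod_{j=1}^{k}(z-\lambda_j)$, so that polynomial division forces $\det(zI_{n-k} - B) = \prod_{j=k+1}^{n}(z-\lambda_j)$; your phrasing gestures at this but does not quite state it.
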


In this work, we investigate how Theorem \ref{thm:rado} can be utilized to change a pair of eigenvalues $\lambda_j, \lambda_j^{-1}$ of a symplectic matrix $S \in \MM_{2n}(\CC)$ (or a symplectic matrix pencil) to desired target values $\mu, \mu^{-1}$ in a structure-preserving way without modifying any other eigenvalues of $S$. Considering Theorem \ref{thm:rado}, the starting point of our discussion is thus the following question:
\begin{itemize}
\item[] \textit{Let $S \in \MM_{2n}(\CC)$ be symplectic with eigenvalues $\lambda_1, \lambda_1^{-1}$,  $\ldots$ , $\lambda_n, \lambda_n^{-1}$, linearly independent eigenvectors $x_1, x_2 \in \CC^{2n}$ for $\lambda_1$ and $\lambda_1^{-1}$, respectively, and $X = [ \; x_1 \; x_2 \; ]$.  How has $C \in \MM_{2n \times 2}(\CC)$ to be chosen, such that $\hat{S} := S + XC^T$ is symplectic with eigenvalues $\mu, \mu^{-1}, \lambda_2, \lambda_2^{-1}, \ldots ,$ $\lambda_n, \lambda_n^{-1}$ for some given value $\mu \in \CC \setminus \lbrace 0 \rbrace$?}
\end{itemize}

The above-mentioned problem will be discussed in Section \ref{sec:symplectic_matrix}. In Section \ref{sec:min_norm} we investigate whether we can find an upper bound $b > 0$ that only depends on $\lambda_1, \mu, x_1$ and $x_2$ that assures the existence of a symplectic matrix $\hat{S} \in \MM_{2n}(\CC)$ with eigenvalues $\mu, \mu^{-1}, \lambda_2, \lambda_2^{-1}, \ldots , \lambda_n, \lambda_n^{-1}$ and relative distance $\Vert \hat{S} - S \Vert/ \Vert S \Vert \leq b$. We derive distinguished matrices $\hat{S}_1, \hat{S}_2$ for which such a bound $b$ can be neatly expressed and related to the relative change in the eigenvalue, i.e. $| \lambda_1 - \mu|/| \lambda_1|$.
We discuss commutativity relations between $S$ and $\hat{S}$ in Section \ref{sec:commutativity} and characterize the Segre characteristics of the eigenvalues of $\hat{S}$ in Section \ref{sec:eigenvectors}. The results of Section \ref{sec:commutativity} will come in handy here to find a condition on the simultaneous diagonalizability of $S$ and $\hat{S}$. In Section \ref{sec:pencils} we partially extend our results from Section \ref{sec:symplectic_matrix} to symplectic matrix pencils.

\subsection{Notation}
The set of all $m \times n$ matrices over $\mathbb{K}$ (where we use either $\mathbb{K} = \CC$ or $\mathbb{K} = \RR$) is denoted by $\MM_{m \times n}(\mathbb{K})$. Whenever $n=m$ we write $\MM_n(\mathbb{K})$ instead of $\MM_{n \times n}(\CC)$. For $J_{2n} \in \MM_{2n}(\RR)$, see \eqref{equ:def_symplectic}, we simply write $J$ and add the index whenever it is necessary to specify the size of $J$. The range of a matrix $A \in \MM_{m \times n}(\CC)$ is the vector space spanned by its columns and is denoted $\textnormal{range}(A)$.
For $A \in \MM_{m \times n}(\CC)$, we denote the Moore-Penrose pseudoinverse of $A$ by $A^+$. In case $m > n$ and $\textnormal{rank}(A)=n$, we have $A^+ = (A^HA)^{-1}A^H$ so that $A^+A = I_n$, while for $n > m$ and $\textnormal{rank}(A) = m$, $A^+ = A^H(AA^H)^{-1}$ yields $AA^+ = I_m$. The superscript $^H$ always denotes the conjugate transpose of a matrix or vector while $^T$ is used for the pure transposition.
Whenever $\lambda \in \CC$ is some complex number, we denote by $\mathfrak{R}(\lambda)$ and $\mathfrak{I}(\lambda)$ its real and imaginary part, respectively. Complex conjugation of a number $x = a + \imath b \in \CC$ is denoted by a bar, i.e. $\overline{x} = a - \imath b$.

\section{Symplectic Eigenvalue Modification}
\label{sec:symplectic_matrix}

Let $S \in \MM_{2n}(\CC)$ be a symplectic matrix (see \eqref{equ:def_symplectic}) with eigenvalues $\lambda_1, \lambda_1^{-1}$, $\ldots$, $\lambda_n , \lambda_n^{-1}$ and let $\mu \in \CC \setminus \lbrace 0 \rbrace$ be given. Furthermore, assume $x_1, x_2 \in \CC^{2n}$ are linearly independent\footnote{If $\lambda_1 \neq \lambda_1^{-1}$, then $x_1$ and $x_2$ are necessarily linear independent. Therefore, the linear independence is only a restrictive requirement if $\lambda_1 = \lambda_1^{-1}$, i.e. $\lambda_1 = \pm 1$.} eigenvectors of $S$ for $\lambda_1$ and $\lambda_1^{-1}$, respectively, and define $X = [ \; x_1 \; x_2 \; ] \in \MM_{2n \times 2}(\CC)$.
In this section, our goal is to determine all possible matrices $C \in \MM_{2n \times 2}(\CC)$ such that $\hat{S} := S + XC^T$ is again symplectic and has the eigenvalues $\mu, \mu^{-1}, \lambda_2, \lambda_2^{-1}, \ldots , \lambda_n, \lambda_n^{-1}$. To this end, we will make use of Rado's theorem and derive a structure-preserving version of Theorem \ref{thm:rado} (see Theorem \ref{thm:symp_change1}).

As it will become clear later, it seems appropriate to consider the situations $x_1^TJx_2 \neq 0$ and $x_1^TJx_2 =0$ seperately. First, we assume that for the eigenvalues $\lambda_1, \lambda_1^{-1}$ there exist eigenvectors $x_1,x_2 \in \CC^{2n}$ such that $x_1^TJx_2 \neq 0$ (this immediately implies $x_1$ and $x_2$ to be linearly independent). In this case, we can assume w.\,l.\,o.\,g. $x_1^TJx_2 = 1$, which can be achieved by a scaling of $x_1$ and/or $x_2$. That is, we have $X^TJX = J_2$. For the matrix $\hat{S} := S+XC^T$ to be symplectic, it has to hold that
\begin{equation} \hat{S}^TJ\hat{S} = \big( S + XC^T \big)^TJ \big( S+XC^T \big) = J. \label{equ:symp_condition} \end{equation}
Using $S^TJS = J$, \eqref{equ:symp_condition} is equivalent to the matrix equation
\begin{equation} CX^TJS + S^TJXC^T + CX^TJXC^T = 0 \label{equ:symp_matrixequation1} \end{equation}
for the unknown matrix $C \in \MM_{2n \times 2}(\CC)$. Notice that \eqref{equ:symp_matrixequation1} can be rewritten as
\begin{equation}
C (X^TJS + J_2C^T ) = - S^TJXC^T
\label{equ:symp_matrixequation1a} \end{equation}
using $X^TJX = J_2$. Since $S^TJX \in \MM_{2n \times 2}(\CC)$ is a matrix of full rank, \eqref{equ:symp_matrixequation1a} immediately implies $\textnormal{range}(C) \subseteq \textnormal{range}(S^TJX)$ for any solution $C$. Thus, for every $C$ satisfying \eqref{equ:symp_matrixequation1}, there is a matrix $R = [r_{ij}]_{ij} \in \MM_2(\CC)$ such that $C = S^TJXR^T$. Plugging this ansatz into \eqref{equ:symp_matrixequation1}, we obtain a $2n \times 2n$ equation for $R$, namely
$$ S^TJXR^TX^TJS + S^TJXRX^TJ^TS + S^TJXR^TJ_2RX^TJ^TS = 0.$$
Replacing $X^TJS$ by $-X^TJ^TS$ this can be rewritten as
\begin{equation} S^TJX \big( R - R^T + R^T J_2 R \big) X^TJ^TS = 0. \label{equ:symp_matrixequation1b} \end{equation}
Finally, we may multiply \eqref{equ:symp_matrixequation1b} with the pseudo inverses $(S^TJX)^+$ from the left and with $(X^TJ^TS)^+$ from the right to obtain
\begin{equation} R - R^T + R^TJ_2R = 0, \label{equ:symp_matrixequation2} \end{equation}
which is a matrix equation for $R$ of size $2 \times 2$ that is equivalent to \eqref{equ:symp_matrixequation1b}.
As $R-R^T$ and $R^TJ_2R$ are both skew-symmetric, their diagonals are identically zero. Comparing the entries of $R-R^T$ and $R^TJ_2R$ in the (1,2) position, we obtain the condition
\begin{equation} r_{12}-r_{21} + r_{11}r_{22}-r_{12}r_{21}=0\label{equ:symp_matrixequation3} \end{equation}
for \eqref{equ:symp_matrixequation2} to hold (comparing the elements in the $(2,1)$ position certainly gives the same condition with a minus sign). In summary, \textit{a matrix of the form $\hat{S} = S + XC^T$ is symplectic if and only if $C = S^TJXR^T$ for some matrix $R = [r_{ij}]_{ij} \in \MM_2(\CC)$ whose entries satisfy \eqref{equ:symp_matrixequation3}.}

Next, to achieve the desired eigenvalue modification, according to Theorem \ref{thm:rado} we need to assure that the eigenvalues of
\begin{equation} \Omega := \Lambda + C^TX = \begin{bmatrix} \lambda_1 & 0 \\ 0 & \lambda_1^{-1} \end{bmatrix} + C^TX = \begin{bmatrix} \lambda_1 & 0 \\ 0 & \lambda_1^{-1} \end{bmatrix} + RX^TJ^TSX \label{equ:omega} \end{equation}
become equal to $\mu$ and $\mu^{-1}$. To this end, recall that $SX = X \textnormal{diag}(\lambda_1, \lambda_1^{-1})$ (by construction of $X$). Thus $\textnormal{diag}(\lambda_1, \lambda_1^{-1}) + RX^TJ^TSX = \textnormal{diag}(\lambda_1, \lambda_1^{-1}) - RX^TJX \textnormal{diag}(\lambda_1, \lambda_1^{-1})$ which, since $X^TJX = J_2$, yields
\begin{equation} \Omega = \begin{bmatrix} \lambda_1 & 0 \\ 0 & \lambda_1^{-1} \end{bmatrix} - RJ_2 \begin{bmatrix} \lambda_1 & 0 \\ 0 & \lambda_1^{-1} \end{bmatrix} = \begin{bmatrix} \lambda_1 + \lambda_1 r_{12} & -\lambda_1^{-1}r_{11} \\ \lambda_1 r_{22} & \lambda_1^{-1} - \lambda_1^{-1}r_{21} \end{bmatrix}. \label{equ:omega1} \end{equation}
The characteristic polynomial of $\Omega$ is
$$ p(z) = z^2 - \big( \lambda_1(1+r_{12}) + \lambda_1^{-1}(1-r_{21}) \big)z + (1+r_{12})(1-r_{21}) + r_{11}r_{22},$$
which should, by Theorem \ref{thm:rado}, be equal to $q(z) = (z - \mu)(z - \mu^{-1}) = z^2 - (\mu + \mu^{-1}) +1$ to achieve that $\hat{S}$ will have the eigenvalues $\mu$ and $\mu^{-1}$. This gives two more conditions: one the one hand $\lambda_1(1+r_{12}) + \lambda_1^{-1}(1-r_{21}) = \mu + \mu^{-1}$, i.e.
\begin{equation} \lambda_1 r_{12} - \lambda_1^{-1}r_{21} = (\mu + \mu^{-1}) - (\lambda_1 + \lambda_1^{-1}). \label{equ:symp_matrixequation4} \end{equation}
One the other hand, $(1+r_{12})(1-r_{21}) + r_{11}r_{22} = 1$. The latter condition, however, is equal to condition \eqref{equ:symp_matrixequation3} obtained for the symplectic structure above. Thus, \textit{additionally to \eqref{equ:symp_matrixequation3}, which is required for $S+XC^T$ to be symplectic, the equation \eqref{equ:symp_matrixequation4} has to hold to achieve that $\mu, \mu^{-1}$ become eigenvalues of $S + XC^T$.}
In conclusion, we obtain the following version of Theorem \ref{thm:rado} that answers the question stated in Section \ref{sec:intro} on the eigenvalue modification for symplectic matrices.

\begin{theorem} \label{thm:symp_change1}
Let $S \in \MM_{2n}(\CC)$ be symplectic with eigenvalues $\lambda_1, \lambda_1^{-1}, \lambda_2, \lambda_2^{-1},$ $\ldots , \lambda_n, \lambda_n^{-1}$ and let $\mu \in \CC \setminus \lbrace 0 \rbrace$ be given. Let $x_1, x_2 \in \CC^{2n}$ be eigenvectors for $\lambda_1$ and $\lambda_1^{-1}$, respectively, normalized such that $X^TJX = J_2$ for $X = [ \; x_1 \; x_2 \; ] \in \MM_{2n \times 2}(\CC)$ and set $d := (\mu + \mu^{-1} ) - ( \lambda_1 + \lambda_1^{-1})$. Then the matrix
\begin{equation} \hat{S} := S + X C^T \in \MM_{2n}(\CC) \label{equ:symp_change1} \end{equation}
is symplectic and has the eigenvalues $\mu, \mu^{-1}, \lambda_2, \lambda_2^{-1}, \ldots , \lambda_n, \lambda_n^{-1}$ if and only if $C^T = R X^TJ^TS$ for some matrix $R=[r_{ij}]_{ij} \in \MM_2(\CC)$ whose entries satisfy the conditions
\begin{align}
d &= \lambda_1r_{12} - \lambda_1^{-1} r_{21}, \; \textnormal{and} \label{equ:symp_change1a} \\
0 &= r_{12} - r_{21} + r_{11} r_{22} - r_{12}r_{21}.  \label{equ:symp_change1b}
\end{align}
\end{theorem}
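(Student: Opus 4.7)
The plan is to organize the derivation presented in the preceding paragraphs into a clean biconditional proof. Both the necessity and sufficiency of conditions (\ref{equ:symp_change1a})--(\ref{equ:symp_change1b}) follow by retracing the same sequence of manipulations; the key is that each step is in fact an equivalence.

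First I would handle the symplectic condition. Expanding $\hat{S}^T J \hat{S} = J$ and using $S^TJS = J$ reduces the symplectic requirement to
\begin{equation*}
CX^TJS + S^TJXC^T + CX^TJXC^T = 0.
\end{equation*}
Because $S$ is invertible and $x_1, x_2$ are linearly independent, $S^TJX \in \MM_{2n\times 2}(\CC)$ has full column rank $2$. Rewriting the displayed equation as $C(X^TJS + J_2 C^T) = -S^TJX C^T$ then forces $\textnormal{range}(C) \subseteq \textnormal{range}(S^TJX)$, so $C = S^TJX R^T$ for some $R \in \MM_2(\CC)$. Substituting this ansatz and multiplying the resulting identity on the left by $(S^TJX)^+$ and on the right by $(X^TJ^TS)^+$ — both being left/right inverses thanks to the full-rank property — reduces the equation equivalently to the $2\times 2$ identity $R - R^T + R^TJ_2R = 0$. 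Since $R - R^T$ and $R^TJ_2R$ are both skew-symmetric, this $2\times 2$ equation collapses to the single scalar condition (\ref{equ:symp_change1b}).

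Next I would handle the eigenvalue requirement through Rado's theorem. Given $C^T = RX^TJ^TS$, the matrix $\Omega = \Lambda + C^TX$ is computed exactly as in (\ref{equ:omega})--(\ref{equ:omega1}) by using $SX = X\,\textnormal{diag}(\lambda_1, \lambda_1^{-1})$ together with $X^TJX = J_2$. Its characteristic polynomial must then match $(z-\mu)(z-\mu^{-1}) = z^2 - (\mu+\mu^{-1})z + 1$. Matching the coefficient of $z$ yields precisely (\ref{equ:symp_change1a}), while matching the constant term gives $(1+r_{12})(1-r_{21}) + r_{11}r_{22} = 1$, which upon expansion coincides with (\ref{equ:symp_change1b}). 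So the constant-term match is redundant once symplecticity is imposed, and Theorem \ref{thm:rado} guarantees that $\hat{S}$ inherits the remaining eigenvalues $\lambda_2, \lambda_2^{-1}, \ldots, \lambda_n, \lambda_n^{-1}$ from $S$ automatically.

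The converse is immediate: if $R$ satisfies both (\ref{equ:symp_change1a}) and (\ref{equ:symp_change1b}) and we set $C^T = RX^TJ^TS$, then (\ref{equ:symp_change1b}) makes $\hat{S}$ symplectic by running the first paragraph's reduction in reverse, and (\ref{equ:symp_change1a})--(\ref{equ:symp_change1b}) together ensure that $\Omega$ has characteristic polynomial $(z-\mu)(z-\mu^{-1})$, so Rado's theorem delivers the prescribed spectrum. The main obstacle in writing this carefully is making sure that the passage from the $2n\times 2n$ equation for $C$ to the $2\times 2$ equation for $R$ via pseudoinverses is a genuine equivalence rather than a one-way implication; this relies on the full column rank of $S^TJX$, which is where the hypothesis that $x_1, x_2$ are linearly independent and $S$ is nonsingular enters in an essential way.
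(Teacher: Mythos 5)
Your proposal is correct and retraces the paper's own derivation step for step: the same reduction of the symplectic condition to the $2\times 2$ equation $R - R^T + R^TJ_2R = 0$ via the ansatz $C = S^TJXR^T$ and pseudoinverse cancellation, followed by the same application of Rado's theorem to $\Omega = \Lambda + C^TX$ and matching of characteristic-polynomial coefficients. You correctly flag that the equivalence of the pseudoinverse step (and, earlier, the range containment $\textnormal{range}(C)\subseteq\textnormal{range}(S^TJX)$) hinges on the full column rank of $S^TJX$, which is the one place the argument could silently become a one-way implication.
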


Notice that the matrix $\hat{S} = S + XRX^TJ^TS$ in \eqref{equ:symp_change1} can also be expressed as $\hat{S} = (I_{2n} + XRX^TJ^T)S$ or as
\begin{equation} \begin{aligned} \hat{S} = S + XR \Lambda^{-1}X^TJ^T &= S + XR \begin{bmatrix} \lambda_1^{-1} & 0 \\ 0 & \lambda_1 \end{bmatrix} X^TJ^T
\end{aligned} \label{equ:hat_S} \end{equation}
according to the relation $\Lambda^{-1}X^TJ^T = X^TJ^TS$ (where $\Lambda = \textnormal{diag}(\lambda_1, \lambda_1^{-1})$). Furthermore, we see from \eqref{equ:symp_change1a} and \eqref{equ:symp_change1b} that there exist infinitely many possible choices for $R$ that realize the desired eigenvalue modification.

Next, we discuss the case that the eigenvectors $x_1,x_2 \in \CC^{2n}$ of the symplectic matrix $S$ for $\lambda_1$ and $\lambda_1^{-1}$, respectively, satisfy $x_1^TJx_2 = 0$ and how this condition effects the result from Theorem \ref{thm:symp_change1}. To this end, first notice that a symplectic matrix $S \in \MM_{2n}(\CC)$ need in fact not have eigenvectors $x_1, x_2$ for $\lambda_1$ and $\lambda_1^{-1}$ that satisfy $x_1^TJx_2 \neq 0.$ A situation of this kind arises for the symplectic matrix
$$S = \begin{bmatrix} \lambda_1 & 1 & 0 & 0 \\ 0 & \lambda_1 & 0 & 0 \\ 0 & 0 & \lambda_1^{-1} & 0 \\ 0 & 0 & -\lambda_1^{-2} & \lambda_1^{-1} \end{bmatrix}$$
and its eigenvalue $\lambda_1$. The only eigenvectors for $\lambda_1$ and $\lambda_1^{-1}$ are $e_1$ and $e_4$, respectively, and we have $e_1^TJ_4e_4 = 0$. Thus, Theorem \ref{thm:symp_change1} cannot be applied. A simple sufficient (but not necessary) criterion to assure that eigenvectors with $x_1^TJx_2 \neq 0$ must exist, is that $S$ is a diagonalizable matrix, cf. \cite[Lem.\,3, Cor.\,3.1]{laub} and Corollary \ref{cor:diag} below.

Whenever $x_1,x_2 \in \CC^{2n}$ are eigenvectors of $S$ for $\lambda_1$ and $\lambda_1^{-1}$ with $x_1^TJx_2 = 0$, then $X^TJX = 0$ follows for $X = [ \; x_1 \; x_2 \;]$. In this case, it follows from \eqref{equ:symp_matrixequation1} that \eqref{equ:symp_matrixequation1a} takes the form
$$ C X^TJS = - S^TJXC^T.$$
Again we obtain $\textnormal{range}(C) \subseteq \textnormal{range}(S^TJX)$, so there has to exist some matrix $R = [r_{ij}]_{ij} \in \MM_2(\CC)$ such that $C = S^TJXR^T$. However, despite the concrete form of $R$,  analogously to \eqref{equ:omega} we obtain
$$ \Omega = \begin{bmatrix} \lambda & 0 \\ 0 & \lambda^{-1} \end{bmatrix} + C^TX = \begin{bmatrix} \lambda & 0 \\ 0 & \lambda^{-1} \end{bmatrix} - RX^TJSX  = \begin{bmatrix} \lambda & 0 \\ 0 & \lambda^{-1} \end{bmatrix} $$
since $SX = X \textnormal{diag}(\lambda, \lambda^{-1})$ and $X^TJX = 0$. Thus, \textit{even if $R$ is chosen according to \eqref{equ:symp_change1b} such that $\hat{S} = S + XRX^TJ^TS$ is symplectic, no change in the eigenvalues can be achieved.} In consequence, a change of an eigenvalue pair $\lambda_1, \lambda_1^{-1}$ of a symplectic matrix by Rado's theorem in a structure-preserving way is only possible if there exist eigenvectors $x_1$ and $x_2$ for $\lambda_1$ and $\lambda_1^{-1}$, respectively, such that $x_1^TJx_2 \neq 0$. In the next section, we derive a universal criterion on the existence of such eigenvectors.

\subsection{Applying Theorem \ref{thm:symp_change1}: a criterion}

We will now characterize those symplectic matrices $S \in \MM_{2n}(\CC)$, for which an eigenvalue adjustment according to Theorem \ref{thm:symp_change1} is possible. The condition derived below involves the Segre characteristic of the eigenvalue $\lambda_1 \in \sigma(S)$ to be modified.

First, let $\lambda_1 \in \sigma(S)$ and $Sx_1 = \lambda_1 x_1$ and $Sx_2 = \lambda_1^{-1}x_2$. Now suppose at least one of both vectors, e.g. $x_1$, belongs to a nontrivial\footnote{By nontrivial, we mean a Jordan chain of length $\geq 2$ while a trivial Jordan chain refers to a chain of length one.} Jordan chain, that is, there is some $z \in \CC^{2n}$ such that $(S - \lambda_1 I_{2n})z = x_1$ (and possibly more generalized eigenvectors beside $z$). Then we have
\begin{equation}
\begin{aligned}
x_1^TJx_2 = \big( (S - \lambda_1 I_{2n})z \big)^TJx_2 &= z^TS^TJx_2 - \lambda_1 z^TJx_2 \\ &= z^TJJ^TS^TJx_2 - \lambda_1 z^TJx_2 \\ &= z^TJS^{-1}x_2 - \lambda_1 z^TJx_2 \\ &= \lambda_1 z^TJx_2 - \lambda_1 z^TJx_2 = 0
\end{aligned}
\label{equ:zero_product} \end{equation}
as $J^TS^TJ = S^\star = S^{-1}$ and $S^{-1}x_2 = \lambda_1 x_2$. In consequence, $x_1^TJx_2 = 0$ whenever $x_1,x_2$ are eigenvectors of $S$ for $\lambda_1$ and $\lambda_1^{-1}$, respectively, and at least one of them belongs to a nontrivial Jordan chain. \textit{In other words, we may have $x_1^TJx_2 \neq 0$ only in case both $x_1$ and $x_2$ belong to trivial Jordan chains.} Next, we show that in case $x_1$ belongs to a trivial Jordan chain there must exist $x_2$ (also from a trivial Jordan chain) such that $x_1^TJx_2 \neq 0$.

To this end, assume that $\lambda_1 \in \CC$ is an eigenvalue of the symplectic matrix $S \in \MM_{2n}(\CC)$ with $p \geq 1$ ones in its Segre characteristic (that is, there are $p$ Jordan blocks of size $1 \times 1$, i.e. $p$ trivial Jordan chains, and possibly other Jordan blocks of size $\geq 2$). Then there exists a matrix $F \in \MM_{2n}(\CC)$ transforming $S$ to the following Jordan form
\begin{equation} F^{-1}SF =: G = \begin{bmatrix} \begin{array}{c|c} \begin{array}{ccc} \lambda_1 & & \\ & \ddots & \\ & & \lambda_1 \end{array} & 0 \\ \hline  0 & \hat{G} \end{array} \end{bmatrix} \label{equ:special_jordan} \end{equation}
where the upper-left block is $\lambda_1I_p$ and $\hat{G}$ contains all other Jordan blocks (note that there might also be other Jordan blocks for $\lambda_1$ of size $\geq 2$ contained in $\hat{G}$).
Now define
\begin{equation} x_1 := Fe_1 \quad \textnormal{and} \quad \tilde{f}^H := e_1^TF^{-1}. \label{equ:leftright0} \end{equation}
Then $x_1$ is a right eigenvector of $S$ for $\lambda_1$ ($Sx_1 = \lambda_1 x_1$) and $\tilde{f}$ is a left eigenvector of $S$ for $\lambda_1$ ($\tilde{f}^HS = \lambda_1 \tilde{f}^H$). Certainly, $\tilde{f}^Hx_1 = 1$. Now we define $x_2^T := \tilde{f}^HJ$. Then we have
\begin{equation} \begin{aligned}
\tilde{f}^HS = \lambda_1 \tilde{f}^H \; &\Leftrightarrow \; x_2^TJ^TS = \lambda_1 x_2^TJ^T \\ &\Leftrightarrow \; x_2^TJ^TSJ = \lambda_1x_2^T \\ &\Leftrightarrow \; x_2^TS^{-T} = \lambda_1x_2^T \; \Leftrightarrow \; Sx_2 = \lambda_1^{-1} x_2.
\end{aligned} \label{equ:rightleft} \end{equation}
It follows that $x_2$ is an eigenvector of $S$ for $\lambda_1^{-1}$. Now we obtain
$$ x_1^TJx_2 = x_2^TJ^Tx_1 = \tilde{f}^Hx_1 = 1 \neq 0.$$
In conclusion, for any eigenvector $x_1$ of $S$ for $\lambda_1$ belonging to a trivial Jordan chain, there always exists an eigenvector $x_2$ of $S$ for $\lambda_1^{-1}$ such that $x_1^TJx_2 \neq 0.$ Recall from our observation \eqref{equ:zero_product} above, that $x_2$ must also be a vector from a trivial Jordan chain. We conclude our findings in the following theorem.

\begin{theorem} \label{thm:nec_condition}
Let $S \in \MM_{2n}(\CC)$ be symplectic with $\lambda_1 \in \sigma(S)$ and let $\mu \in \CC \setminus \lbrace 0 \rbrace$ be given. Then Theorem \ref{thm:symp_change1} is applicable to $S$ for $\lambda_1$ and $\mu$, i.e. there exist eigenvectors $x_1, x_2 \in \CC^{2n}$ for $\lambda_1$ and $\lambda_1^{-1}$, respectively, with $x_1^TJx_2 \neq 0$, if and only if the Segre characteristic of $S$ for $\lambda_1$ contains a one, that is, it has the form $((\star, \star, \cdots, \star, 1))$. In particular, eigenvectors $x_1$ for $\lambda_1$ and $x_2$ for $\lambda_1^{-1}$ with $x_1^TJx_2 \neq 0$ always belong to trivial Jordan chains of $S$.
\end{theorem}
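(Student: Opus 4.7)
The plan is to split the equivalence into its two implications, each as a synthesis of one of the two arguments already laid out in the discussion preceding the theorem; the ``in particular'' clause will then drop out as a byproduct.

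For the implication $(\Rightarrow)$ I argue by contrapositive: assume every Jordan block of $S$ for $\lambda_1$ has size $\geq 2$, so that no $1$ appears in its Segre characteristic. The key observation is that under this hypothesis, every eigenvector $x_1$ of $S$ for $\lambda_1$ lies in the range of $S - \lambda_1 I_{2n}$ restricted to the generalized eigenspace at $\lambda_1$. Indeed, that eigenspace is spanned by the heads of the Jordan chains at $\lambda_1$, and the head of a chain of length $\geq 2$ is by construction the image of the second vector in that chain under $S - \lambda_1 I_{2n}$. Thus for every eigenvector $x_1$ of $\lambda_1$ there exists $z \in \CC^{2n}$ with $x_1 = (S - \lambda_1 I_{2n})z$, and the calculation in \eqref{equ:zero_product} applies verbatim to yield $x_1^TJx_2 = 0$ for every eigenvector $x_2$ of $\lambda_1^{-1}$.

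For the implication $(\Leftarrow)$ I reproduce the explicit construction from \eqref{equ:special_jordan}--\eqref{equ:rightleft}: given $p \geq 1$ ones in the Segre characteristic of $\lambda_1$, choose a similarity $F \in \MM_{2n}(\CC)$ bringing $S$ to the block form \eqref{equ:special_jordan} with $\lambda_1 I_p$ as its top-left block, set $x_1 := F e_1$ and $\tilde{f}^H := e_1^T F^{-1}$, and then define $x_2^T := \tilde{f}^H J$. The equivalences in \eqref{equ:rightleft} certify that $S x_2 = \lambda_1^{-1}x_2$, and the computation
$$x_1^T J x_2 = x_2^T J^T x_1 = \tilde{f}^H x_1 = 1$$
delivers the required nonzero pairing; by construction, both $x_1$ and $x_2$ stem from $1 \times 1$ Jordan blocks.

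The ``in particular'' clause follows at once: if either $x_1$ or $x_2$ lay in a nontrivial Jordan chain, \eqref{equ:zero_product}---together with its symmetric counterpart obtained by exchanging the roles of $\lambda_1$ and $\lambda_1^{-1}$, which is legitimate because $\lambda_1$ and $\lambda_1^{-1}$ share the same Segre characteristic---would force $x_1^T J x_2 = 0$, contradicting the hypothesis. The only step I expect to require some care, and hence the main obstacle in a rigorous write-up, is the opening observation of the forward direction: namely, making explicit the Jordan-form fact that the absence of $1$'s in the Segre characteristic of $\lambda_1$ forces every eigenvector of $\lambda_1$ to lie in the range of $S - \lambda_1 I_{2n}$ on the generalized eigenspace, so that it plugs cleanly into \eqref{equ:zero_product}.
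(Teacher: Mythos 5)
Your proposal is correct and follows essentially the same path as the paper's own argument: the forward implication rests on the vanishing-pairing computation in \eqref{equ:zero_product} (the paper states it as ``nonzero pairing forces both vectors into trivial chains,'' you phrase it contrapositively, but the key Jordan-form observation --- that every eigenvector of $\lambda_1$ lies in $\textnormal{range}(S-\lambda_1 I_{2n})$ precisely when no $1$ appears in the Segre characteristic --- is the same underlying fact the paper uses implicitly), and the reverse implication is the identical explicit construction from \eqref{equ:special_jordan}--\eqref{equ:rightleft}. The ``in particular'' clause is likewise the paper's own consequence of \eqref{equ:zero_product} and its counterpart with the roles of $\lambda_1$ and $\lambda_1^{-1}$ exchanged.
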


Do not overlook that Theorem \ref{thm:nec_condition} applies also for $\lambda_1 = \lambda_1^{-1}$, i.e. $\lambda_1 = \pm 1$. In this case $\lambda_1 \in \sigma(S)$ necessarily has an even multiplicity and an even number of Jordan blocks of the same size, so a Segre characteristic of the form $((\star, \star, \cdots, \star, 1))$ implies that there appears at least another one, i.e. $((\star, \star, \cdots, \star, 1,1))$. Then the reasoning in \eqref{equ:special_jordan}, \eqref{equ:leftright0} and \eqref{equ:rightleft} applies in the same way.
If $S$ is diagonalizable, the Segre characteristic of $S$ for any eigenvalue $\lambda_j \in \sigma(S)$ consists only of ones. So we immediately obtain the following corollary.

\begin{corollary} \label{cor:diag}
Let $S \in \MM_{2n}(\CC)$ be symplectic with $\lambda_1 \in \sigma(S)$ and let $\mu \in \CC \setminus \lbrace 0 \rbrace$ be given. Then Theorem \ref{thm:symp_change1} is applicable to $S$ for $\lambda_1$ and $\mu$ if $S$ is diagonalizable.
\end{corollary}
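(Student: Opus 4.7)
The plan is to deduce this corollary essentially as a direct consequence of Theorem \ref{thm:nec_condition}, using the fact that diagonalizability is precisely the statement that all Jordan blocks have size one.

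First I would observe that if $S \in \MM_{2n}(\CC)$ is diagonalizable, then for every eigenvalue $\lambda \in \sigma(S)$ the associated Jordan blocks are all of size $1 \times 1$. In particular, the Segre characteristic of $\lambda_1$ is of the form $((1, 1, \ldots , 1))$ and therefore trivially contains a one. This checks the hypothesis of Theorem \ref{thm:nec_condition}. Note also that $\lambda_1^{-1} \in \sigma(S)$ automatically, since $S$ is symplectic and its spectrum is symmetric with respect to the reciprocal map with matching Segre characteristics; diagonalizability then also guarantees that $\lambda_1^{-1}$ has only trivial Jordan chains.

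By Theorem \ref{thm:nec_condition}, we can thus pick eigenvectors $x_1$ for $\lambda_1$ and $x_2$ for $\lambda_1^{-1}$ with $x_1^T J x_2 \neq 0$. After rescaling (say $x_1 \leftarrow x_1/(x_1^T J x_2)$) we may assume $X^T J X = J_2$ for $X = [\,x_1 \; x_2\,]$, which is exactly the normalization required to invoke Theorem \ref{thm:symp_change1}. Consequently Theorem \ref{thm:symp_change1} is applicable for the prescribed target $\mu$.

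I do not anticipate any real obstacle here: the only conceptual point to keep in mind is the case $\lambda_1 = \pm 1$, where $\lambda_1$ and $\lambda_1^{-1}$ coincide. Diagonalizability still forces the Segre characteristic to consist entirely of ones, and since the eigenvalue $\pm 1$ of a symplectic matrix has even algebraic multiplicity (with matching Jordan structure for itself as its own reciprocal), we obtain at least two linearly independent eigenvectors in the eigenspace, from which the pairing $x_1^T J x_2 \neq 0$ can be arranged exactly as in the remark following Theorem \ref{thm:nec_condition}. This completes the proof.
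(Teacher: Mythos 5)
Your proof is correct and follows essentially the same route as the paper: diagonalizability forces the Segre characteristic of $\lambda_1$ to consist entirely of ones, so Theorem \ref{thm:nec_condition} guarantees the existence of $x_1, x_2$ with $x_1^T J x_2 \neq 0$, and after normalization Theorem \ref{thm:symp_change1} applies. Your additional remark on the $\lambda_1 = \pm 1$ case is the same consideration the paper makes just before the corollary.
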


\section{Bounding the relative change}
\label{sec:min_norm}
Let $S \in \MM_{2n}(\CC)$ be symplectic. For the matrix $\hat{S} = S + XRX^TJ^TS$ in \eqref{equ:symp_change1} we immediately obtain a bound on its (absolute or relative) change in norm with respect to $S$. That is,
\begin{equation} \Vert S - \hat{S} \Vert \leq \Vert R \Vert \Vert X \Vert \Vert X^T \Vert \Vert S \Vert \quad \textnormal{and} \quad \frac{\Vert S - \hat{S} \Vert}{\Vert S \Vert} \leq  \Vert R \Vert \Vert X \Vert \Vert X^T \Vert \label{equ:main_bound} \end{equation}
hold for any submultiplicative and unitarily invariant matrix norm $\Vert \cdot \Vert$. In this section, we intend to derive explicit bounds of the relative distance between $S$ and $\hat{S}$ for certain choices of $R = [r_{ij}]_{ij} \in \MM_2(\CC)$. To this end, we assume $\Vert \cdot \Vert = \Vert \cdot \Vert_F$ so that $\Vert X \Vert_F = \Vert X^T \Vert_F$ holds and the upper bound in \eqref{equ:main_bound} reduces to $\Vert R \Vert_F \Vert X \Vert_F^2$.

To bound the relative change $\Vert \hat{S} - S \Vert_F/\Vert S \Vert_F$ with respect to $S$ consider again \eqref{equ:symp_change1a} and \eqref{equ:symp_change1b}. The solution set to \eqref{equ:symp_change1a} is an affine subspace of $\CC^2$ and all solutions may be parameterized as
\begin{equation} r_{12} = \eta \lambda_1^{-1}, \qquad r_{21} = - \lambda_1d + \eta \lambda_1, \qquad \eta \in \CC. \label{equ:r12r21} \end{equation}
Plugging these expressions for $r_{12}$ and $r_{21}$ into \eqref{equ:symp_change1b} yields a polynomial in $\eta$, i.e.
\begin{equation} p(\eta) = - \eta^2 + \eta \left( \lambda_1^{-1} + d - \lambda_1 \right) + d\lambda_1 + r_{11}r_{22}. \label{equ:eta_pol} \end{equation}
Thus, depending on $r_{11}$ and $r_{22}$ (which can both be arbitrary), in \eqref{equ:eta_pol} there are always two solutions $\eta_1, \eta_2 \in \CC$ of $p(\eta) = 0$
and, in consequence, two matrices
\begin{equation} R(\eta_j, r_{11},r_{22}) := \begin{bmatrix} r_{11} & \eta_j \lambda_1^{-1} \\ \lambda_1(\eta_j-d) & r_{22} \end{bmatrix}, \quad j=1,2, \label{equ:general_R} \end{equation}
so that their entries satisfy \eqref{equ:symp_change1a} and \eqref{equ:symp_change1b}.

To find some $R \in \MM_2(\CC)$ that yields a small norm $\Vert R \Vert_F$ and thus a small bound in \eqref{equ:main_bound}, it seems natural to consider the case $r_{11}r_{22}=0$, in particular $r_{11}=r_{22}=0$\footnote{Certainly, choosing $r_{11}=0$ and $r_{22} \neq 0$ gives the same roots of $p(\eta)=0$ in \eqref{equ:eta_pol}, and thus the same values for $r_{12}$ and $r_{21}$, but a larger Frobenius norm of $R$ than choosing $r_{11}=r_{22}=0$.}.
The two possible roots of $p(\eta)$ for $r_{11}r_{22}=0$ are $\eta_1 = \mu - \lambda_1$ and $\eta_2 = \mu^{-1} - \lambda_1$. The matrices $R_1 := R(\eta_1,0,0)$ and $R_2 := R(\eta_2,0,0)$ that arise according to \eqref{equ:general_R}
 are thus given by
\begin{equation} R_1 =  \begin{bmatrix} 0 & \lambda_1^{-1}(\mu - \lambda_1) \\ \mu^{-1}(\mu - \lambda_1)& 0 \end{bmatrix}, \quad R_2 =  \begin{bmatrix} 0 & \mu^{-1}(\lambda_1^{-1}- \mu) \\ \lambda_1(\lambda_1^{-1} - \mu) & 0 \end{bmatrix}. \label{equ:min_R} \end{equation}
Using $R_1$ and $R_2$ in \eqref{equ:min_R}, explicit bounds can be found on $\Vert \hat{S} - S \Vert_F /\Vert S \Vert_F$. According to \eqref{equ:main_bound} and \eqref{equ:min_R} such a bound $b \geq 0$ only depends on $\lambda_1$, $\mu$ and the eigenvectors of $S$ for $\lambda_1$ and $\lambda_1^{-1}$ and guarantees the existence of a symplectic matrix $\hat{S} \in \MM_{2n}(\CC)$ that solves the problem from Section \ref{sec:intro} with $\Vert \hat{S} - S \Vert_F /\Vert S \Vert_F \leq b$. To formulate these bounds, we impose a condition on $X$ to estimate $\Vert X \Vert_F$ without computing the norm. In particular, we assume the eigenvectors $x_1,x_2 \in \CC^{2n}$ of $S$ for $\lambda_1$ and $\lambda_1^{-1}$, respectively, to be normalized, i.e. $\Vert x_1 \Vert_2 = \Vert x_2 \Vert_2 = 1$ and $X \in \MM_{2n \times 2}(\CC)$ to be of the form
\begin{equation} X = \frac{1}{\sqrt{x_1^TJx_2}} \begin{bmatrix} x_1 & x_2 \end{bmatrix}. \label{equ:X_normalized} \end{equation}
Then $X^TJX = J_2$ holds and  it follows that
\begin{equation} \Vert X \Vert_{F}^2 = \left| \frac{1}{\sqrt{x_1^TJx_2}} \right|^2 \left\Vert \begin{bmatrix} x_1 & x_2 \end{bmatrix} \right\Vert_{F}^2 = \frac{1}{| x_1^TJx_2|} \left\Vert \begin{bmatrix} x_1 & x_2 \end{bmatrix} \right\Vert_{F}^2 = \frac{2}{|x_1^TJx_2|} \label{equ:normX} \end{equation}
The value $1/|x_1^TJx_2|$ has a nice interpretation whenever $\lambda_1$ is a  \textit{simple} eigenvalue of $S$ and $\Vert x_1 \Vert_2 = \Vert x_2 \Vert_2 = 1$ holds. To see this, recall that, whenever $A \in \MM_n(\CC)$ has a simple eigenvalue $\lambda \in \CC$ (i.e. its algebraic multiplicity equals one), then
$$ \kappa(A, \lambda) := \frac{\Vert u \Vert_2 \Vert v \Vert_2}{|v^Hu|}$$
is called its condition number, where $u \in \CC^{2n}$ and $v \in \CC^{2n}$ are right and left eigenvectors of $A$ for $\lambda$ (i.e. $Au = \lambda u$ and $v^HA = \lambda v^H$).
It is a measure on how sensitive $\lambda$ reacts to small changes in the matrix $A$, see \cite[Sec.\,3.3]{saad_evals}. As we have seen in \eqref{equ:rightleft} above, $(x_2^TJ^T)S = \lambda_1 (x_2^TJ^T)$ whenever $x_2$ satisfies $Sx_2 = \lambda_1^{-1}x_2$. Thus, for simple $\lambda_1$ (which implies that $\lambda_1^{-1}$ is simple as well) we can choose $u = x_1$ and $v^H = x_2^TJ^T$ so that
$$ \kappa(S, \lambda_1) = \frac{\Vert u \Vert_2 \Vert v \Vert_2}{|v^Hu|} = \frac{\Vert x_1 \Vert_2 \Vert J\overline{x}_2 \Vert_2}{|x_2^TJ^Tx_1|} = \frac{1}{|x_1^TJx_2|}$$
since $\Vert x_1 \Vert_2 = 1$ and $\Vert J\overline{x}_2 \Vert_2 = \Vert \overline{x}_2 \Vert_2 = \Vert x_2 \Vert_2 = 1$. We can now formulate the following theorem which follows directly from the bound in \eqref{equ:main_bound}, the observation in \eqref{equ:normX} and the Frobenius norms of the matrices $R_1,R_2$ in \eqref{equ:min_R}.

\begin{theorem} \label{thm:coarse_bound}
Let $S \in \MM_{2n}(\CC)$ be symplectic with $\lambda_1, \lambda_1^{-1} \in \sigma(S)$ and let $\mu \in \CC \setminus \lbrace 0 \rbrace$ be given. Let $x_1, x_2 \in \CC^{2n}$ be normalized eigenvectors for $\lambda_1$ and $\lambda_1^{-1}$, respectively, and $X \in \MM_{2n \times 2}(\CC)$ as in \eqref{equ:X_normalized}. Define
$$ \Phi := \frac{2}{|x_1^TJx_2|} \; \big( = 2 \kappa(S, \lambda_1) \; \textnormal{if $\lambda_1$ is simple} \big).$$
\begin{enumerate}
\item[$(i)$] Let $\hat{S}_1 = S + XR_1X^TJ^TS$ be constructed according to Theorem \ref{thm:symp_change1} with $R_1$ from \eqref{equ:min_R}. Then
\begin{alignat}{3}
&\frac{\Vert \hat{S}_1 - S \Vert_F}{\Vert S \Vert_F} &&\leq  \frac{| \lambda_1 - \mu |}{| \lambda_1|} &&\left[ \Phi  \sqrt{1 + \frac{| \lambda_1|^2}{| \mu|^2}} \right].  \label{equ:frob_bound_S1a}
\end{alignat}
\item[$(ii)$] Let $\hat{S}_2 = S + XR_2X^TJ^TS$ be constructed according to Theorem \ref{thm:symp_change1} with $R_2$ from \eqref{equ:min_R}. Then
\begin{alignat}{3}
&\frac{\Vert \hat{S}_2 - S \Vert_F}{\Vert S \Vert_F} &&\leq \frac{| \lambda_1^{-1}-\mu|}{| \lambda_1^{-1}|} &&\left[ \Phi \sqrt{1 + \frac{|\lambda_1^{-1}|^2}{| \mu|^2}}  \right]. \label{equ:frob_bound_S2a}
\end{alignat}
\end{enumerate}
\end{theorem}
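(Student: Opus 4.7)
The plan is to derive both bounds as direct consequences of the general inequality \eqref{equ:main_bound} (restricted to the Frobenius norm), together with the normalization identity \eqref{equ:normX} and explicit computations of $\|R_1\|_F$ and $\|R_2\|_F$. Since $R_1, R_2$ were obtained precisely so that $\hat{S}_1, \hat{S}_2$ are symplectic and have the desired eigenvalues $\mu, \mu^{-1}$ (the structural conditions \eqref{equ:symp_change1a}--\eqref{equ:symp_change1b} hold by construction, as verified in the derivation of \eqref{equ:min_R}), Theorem \ref{thm:symp_change1} guarantees that both constructions are valid, and only the norm estimate remains.

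First, I would specialize \eqref{equ:main_bound} to $\|\cdot\|=\|\cdot\|_F$. Because the Frobenius norm is invariant under transposition, $\|X^T\|_F = \|X\|_F$, so the bound reduces to
\[ \frac{\|\hat{S}_j - S\|_F}{\|S\|_F} \leq \|R_j\|_F \, \|X\|_F^2, \qquad j=1,2. \]
Next, using the normalization \eqref{equ:X_normalized}, equation \eqref{equ:normX} gives $\|X\|_F^2 = 2/|x_1^T J x_2| = \Phi$.

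The only remaining step is a direct Frobenius-norm computation on the two $2\times 2$ matrices in \eqref{equ:min_R}. For $R_1$, both diagonal entries vanish, so
\[ \|R_1\|_F^2 = \bigl|\lambda_1^{-1}(\mu-\lambda_1)\bigr|^2 + \bigl|\mu^{-1}(\mu-\lambda_1)\bigr|^2 = \frac{|\mu-\lambda_1|^2}{|\lambda_1|^2}\left(1 + \frac{|\lambda_1|^2}{|\mu|^2}\right), \]
which, combined with $\|X\|_F^2 = \Phi$, yields exactly \eqref{equ:frob_bound_S1a}. For $R_2$, the same computation gives
\[ \|R_2\|_F^2 = \bigl|\mu^{-1}(\lambda_1^{-1}-\mu)\bigr|^2 + \bigl|\lambda_1(\lambda_1^{-1}-\mu)\bigr|^2 = \frac{|\lambda_1^{-1}-\mu|^2}{|\lambda_1^{-1}|^2}\left(1 + \frac{|\lambda_1^{-1}|^2}{|\mu|^2}\right), \]
after using $|\lambda_1|^2 = 1/|\lambda_1^{-1}|^2$ to rewrite the factor in the form displayed in \eqref{equ:frob_bound_S2a}.

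There is essentially no substantive obstacle here: the theorem is a neat repackaging of the submultiplicative bound \eqref{equ:main_bound} once the specific matrices $R_1, R_2$ have been identified in \eqref{equ:min_R}. The only thing that requires any care is the algebraic step of pulling out the factor $|\lambda_1-\mu|/|\lambda_1|$ (respectively $|\lambda_1^{-1}-\mu|/|\lambda_1^{-1}|$), so that the bound is displayed as the product of the relative eigenvalue change with a factor depending only on the condition-number-like quantity $\Phi$ and on the ratios $|\lambda_1|/|\mu|$, $|\lambda_1^{-1}|/|\mu|$. Once this cosmetic rearrangement is carried out, both estimates \eqref{equ:frob_bound_S1a} and \eqref{equ:frob_bound_S2a} follow immediately.
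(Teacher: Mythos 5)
Your proof is correct and takes exactly the same route the paper does: the author states that the theorem "follows directly from the bound in \eqref{equ:main_bound}, the observation in \eqref{equ:normX} and the Frobenius norms of the matrices $R_1,R_2$ in \eqref{equ:min_R}," which is precisely the three-step argument you spell out. Your explicit computations of $\Vert R_1\Vert_F$ and $\Vert R_2\Vert_F$ (including the use of $|\lambda_1|^2 = 1/|\lambda_1^{-1}|^2$ to match the displayed form of \eqref{equ:frob_bound_S2a}) are correct and simply fill in the algebra the paper leaves implicit.
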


As the following example shows, similar easy bounds can be found with the use of $R_1$ and $R_2$ when $\Vert \cdot \Vert_2$ is considered. In fact, in the 2-norm, such a bound can be sharp.

\begin{example}
For a symplectic matrix $S \in \MM_{2n}(\CC)$, the bound \eqref{equ:main_bound} for $\hat{S}_1 = S + XR_1X^TJ^TS$ with respect to $\Vert \cdot \Vert_2$ can easily be determined as
\begin{alignat}{2}
&\frac{\Vert \hat{S}_1 - S \Vert_2}{\Vert S \Vert_2} &&\leq \frac{| \lambda_1 - \mu |}{|\lambda_1|} \cdot \max \left\lbrace 1 , \frac{| \lambda_1|}{|\mu|} \right\rbrace \Vert X \Vert_2^2,  \label{equ:sharp_bound}
\end{alignat}
It can  be seen for $S = \textnormal{diag}(\Lambda, \Lambda^{-1})$ with $\Lambda = \textnormal{diag}(\lambda_1, \ldots , \lambda_n)$ that the bound in \eqref{equ:sharp_bound} can be sharp. In particular, with eigenvectors $e_1, e_{n+1} \in \RR^{2n}$ for $\lambda_1, \lambda_1^{-1}$, respectively, and $X = [ \; e_1 \; e_{n+1} \; ]$ we have 
$$ XR_1X^TJ^TS = \textnormal{diag} \big( r_{12} \lambda_1, 0, \ldots , 0, -r_{21} \lambda_1^{-1}, 0, \ldots , 0 \big)$$
with nonzero entries in the first and $(n+1)$st position. As $r_{12} \lambda_1 = \mu - \lambda_1$ and $-r_{21} \lambda_1^{-1} = \mu^{-1} - \lambda_1^{-1}$ we obtain under the assumption $| \lambda_1 - \mu | \geq | \lambda_1^{-1} - \mu^{-1} |$
$$ \Vert \hat{S} - S \Vert_2 = \Vert XR_1X^TJ^TS \Vert_2 = | \lambda_1 - \mu |$$
and so $\Vert \hat{S} - S \Vert_2/\Vert S \Vert_2 = | \lambda_1 - \mu |/| \lambda_1|$ if  $\lambda_1$ is the largest eigenvalue of $S$ in absolute value (i.e. $\Vert S \Vert_2 = |\lambda_1|$). On the other hand, for $X$ we certainly have $\Vert X \Vert_2^2 = 1$ and thus, whenever $| \mu | \geq | \lambda_1 |$, the bound on the right hand side in \eqref{equ:sharp_bound} also reduces to $| \lambda_1 - \mu |/| \lambda_1 |$.
\end{example}

\subsection{Improved distance bounds}
Although the bound in \eqref{equ:frob_bound_S1a}  nicely relates $\Vert \hat{S}_1 - S\Vert_F/\Vert S \Vert_F$ to the relative value change $| \lambda_1 - \mu|/| \lambda_1|$ and the condition number $\kappa(S, \lambda_1)$, it can be quite bad\footnote{The same is true for the bound in \eqref{equ:frob_bound_S2a}.}, see e.g. Fig. \ref{fig:upper} in Section \ref{sec:experiments}. In this section we derive sharper bounds under the additional assumption that $\Vert S \Vert_F$ is known.

As before, let $S \in \MM_{2n}(\CC)$ be symplectic with eigenvectors $x_1,x_2 \in \CC^{2n}$ for $\lambda_1, \lambda_1^{-1} \in \sigma(S)$, respectively, such that $x_1^TJx_2 \neq 0$ (and set $X = [ \, x_1 \; x_2 \, ]$).
As seen in \eqref{equ:hat_S}, 
we have for
$\Lambda := \textnormal{diag}(\lambda_1, \lambda_1^{-1})$ and $R \in \MM_2(\CC)$ that satisfies \eqref{equ:symp_change1a} and \eqref{equ:symp_change1b}
$$ \hat{S} = S + XRX^TJ^TS = S + XR \Lambda^{-1}X^TJ^T$$ and therefore $ \Vert S - \hat{S} \Vert_F = \Vert XR \Lambda^{-1}X^TJ^T \Vert_F = \Vert XR \Lambda^{-1}X^T \Vert_F$.
Whenever $R = R_j$ ($j=1,2$) from \eqref{equ:min_R}, then
$$ XR_j \Lambda^{-1}X^T = X \begin{bmatrix} 0 & \eta_j \\ \eta_j - d & 0 \end{bmatrix} X^T =: X \tilde{R}_j X^T, \quad \tilde{R}_j = R_j \Lambda^{-1},$$
according to \eqref{equ:r12r21}. Recall the solutions of $p(\eta)=0$ in \eqref{equ:eta_pol}, i.e. $\eta_1 = \mu - \lambda_1$ (corresponding to $R_1$) and $\eta_2 = \mu^{-1} - \lambda_1$ (corresponding to $R_2$). Instead of estimating $\Vert X \tilde{R}_j X^T \Vert_F$ by $\Vert \tilde{R}_j \Vert_F \Vert X \Vert_F^2$ we now intend to estimate $\Vert X \tilde{R}_j X^T \Vert_F$ directly.
To this end, assume that $X \in \MM_{2n \times 2}(\CC)$ has the form \eqref{equ:X_normalized} and $\hat{S}_j = S + XR_jX^TJ^TS$, $j=1,2$. Then we have for $Y = \sqrt{x_1^TJx_2} X = [\, x_1 \; x_2 \, ]$
$$ \begin{aligned} \Vert S - \hat{S}_j \Vert_F^2 &= \Vert X \tilde{R}_j X^T \Vert_F^2 = \frac{\textnormal{tr} \big( Y \tilde{R}_jY^T(Y \tilde{R}_jY^T)^H \big)}{|x_1^TJx_2|^2} \\ &= \frac{\textnormal{tr} \big( Y \tilde{R}_jY^T \overline{Y} \tilde{R}_j^HY^H \big)}{|x_1^TJx_2|^2}
= \frac{\textnormal{tr} \big( Y^HY \tilde{R}_j \overline{(Y^HY)} \tilde{R}_j^H \big)}{|x_1^TJx_2|^2}.
\end{aligned}$$
Now we further obtain
\begin{align}  | x_1^TJx_2|^2 &\Vert \hat{S}_j - S \Vert_F^2 = \textnormal{tr} \big( Y^HY \tilde{R}_j \overline{(Y^HY)} \tilde{R}_j^H \big) \notag \\ \quad &= \textnormal{tr} \left( \begin{bmatrix} 1 & x_1^Hx_2 \\ x_2^Hx_1 & 1 \end{bmatrix} \begin{bmatrix} 0 & \eta_j \notag \\ \eta_j - d & 0 \end{bmatrix} \begin{bmatrix} 1 & x_2^Hx_1 \\ x_1^Hx_2 & 1 \end{bmatrix} \begin{bmatrix} 0 & \overline{\eta}_j - \overline{d} \\ \overline{\eta}_j & 0 \end{bmatrix} \right) \notag \\
&= |x_1^Hx_2|^2 \overline{\eta}_j (\eta_j-d) + | \eta_j|^2 + |\eta_j-d |^2 + |x_1^Hx_2|^2 \eta_j ( \overline{\eta}_j - \overline{d}) \notag \\
& = |\eta_j|^2 + |\eta_j - d|^2 + 2 |x_1^Hx_2|^2 \cdot \mathfrak{R}(\eta_j(\overline{\eta}_j- \overline{d})). \label{equ:estimate_bound}
\end{align}

Note that $|x_1^Hx_2| \leq \Vert x_1 \Vert_2 \Vert x_2 \Vert_2 = 1$ as $x_1$ and $x_2$ are normalized. Furthermore, $\mathfrak{R}(\eta_j(\overline{\eta}_j- \overline{d})) = \mathfrak{R}(|\eta_j|^2 - \eta_j \overline{d}) = | \eta_j|^2 - \mathfrak{R}(\eta_j \overline{d})$, and we may now derive upper (and lower) bounds for \eqref{equ:estimate_bound} depending on whether this term is positive or negative.
\begin{enumerate}
\item[$(i)$] Suppose $\mathfrak{R}(\eta_j \overline{d}) < | \eta_j|^2$. Then $\mathfrak{R}(\eta_j(\overline{\eta}_j- \overline{d})) > 0$ follows and, since $|x_1^Hx_2|^2 \leq 1$, we can estimate from \eqref{equ:estimate_bound}, setting $|x_1^Hx_2|^2 = 1$,
$$ \begin{aligned} |x_1^TJx_2|^2 \Vert S - \hat{S}_j \Vert_F^2 &\leq |\eta_j|^2 + |\eta_j - d|^2 + 2 \cdot \mathfrak{R}(\eta_j(\overline{\eta}_j- \overline{d})) \\ &= \big( \eta_j + (\eta_j-d) \big) \big( \overline{\eta}_j + (\overline{\eta}_j-\overline{d} ) \big) = | 2 \eta_j - d |^2.\end{aligned}$$
On the other hand, changing the sign of $\mathfrak{R}(\eta_j(\overline{\eta}_j- \overline{d}))$  we certainly have
$$ \begin{aligned} |x_1^TJx_2|^2 \Vert S - \hat{S}_j \Vert_F^2 &\geq | \eta_j|^2 + |\eta_j - d|^2 - 2 \cdot \mathfrak{R}(\eta_j(\overline{\eta}_j- \overline{d})) \\
&= (\eta_j - (\eta_j - d))( \overline{\eta}_j - (\overline{\eta}_j - \overline{d})) = | d|^2.
\end{aligned} $$
\item[$(ii)$] Suppose $\mathfrak{R}(\eta_j \overline{d}) \geq | \eta_j|^2$. Then $\mathfrak{R}(\eta_j(\overline{\eta}_j- \overline{d})) \leq 0$ follows and we can estimate from \eqref{equ:estimate_bound}, setting again $|x_1^Hx_2|^2 = 1$,
$$  |x_1^TJx_2|^2 \Vert S - \hat{S}_j \Vert_F^2 \geq | \eta_j|^2 + | \eta_j-d|^2 + 2 \cdot \mathfrak{R}(\eta_j(\overline{\eta}_j- \overline{d})) = |2\eta_j-d|^2
 $$
while on the other hand, with a change of sign, we obtain
$$ \begin{aligned}
|x_1^TJx_2|^2 \Vert S - \hat{S}_j \Vert_F^2 &\leq | \eta_j|^2 + | \eta_j-d|^2 - 2 \cdot \mathfrak{R}(\eta_j(\overline{\eta}_j- \overline{d})) = |d|^2.
\end{aligned} $$

\end{enumerate}
Before we state our findings in the next theorem, notice that there are neat expressions for the terms $2\eta_j-d$, $j=1,2$, arising above, i.e.
$$ 2\eta_1 - d = \frac{\mu - \lambda_1}{\lambda_1} \big( \lambda_1 + \mu^{-1} \big), \quad 2\eta_2 - d = \frac{\lambda_1^{-1} - \mu}{\lambda_1^{-1}} \big(\mu^{-1} +  \lambda_1^{-1}\big).$$
As it turns out, also $d$ can be rewritten in a similar fashion as
\begin{equation} d = \frac{\mu - \lambda_1}{\lambda_1} \big( \lambda_1 - \mu^{-1} \big) = \frac{\lambda_1^{-1} - \mu}{\lambda_1^{-1}} \big( \mu^{-1} - \lambda_1^{-1} \big). \label{equ:expr_d} \end{equation}
Finally, the two conditions to be checked in $(i)$ and $(ii)$ above can be simplified. 
For $\eta_1 = \mu - \lambda_1$ one finds, after some reformulations,
$$ \begin{aligned}
\mathfrak{R} \big( \eta_1 \overline{d} \big) - |\eta_1|^2  &= \frac{1}{2} \big( (\mu - \lambda_1)\overline{d} + (\overline{\mu} - \overline{\lambda}_1)d \big) - | \mu - \lambda_1|^2 \\
&= \frac{1}{2} \left( \frac{|\mu - \lambda_1|^2 \overline{d}}{\overline{\mu} - \overline{\lambda}_1} + \frac{|\mu - \lambda_1|^2d}{\mu - \lambda_1} \right) - |\mu - \lambda_1|^2 \\
&= - | \mu - \lambda_1|^2 \left( 1 - \frac{1}{2} \left( \frac{\overline{d}}{\overline{\mu} - \overline{\lambda}_1} + \frac{d}{\mu - \lambda_1} \right) \right) \\ &= - |\mu - \lambda_1|^2 \left( 1 - \mathfrak{R} \left( \frac{d}{\mu - \lambda_1} \right) \right) \\
&= - |\mu - \lambda_1|^2 \left( 1 - \mathfrak{R} \left( \frac{\lambda_1 - \mu^{-1}}{\lambda_1} \right) \right) \\
&= -| \mu - \lambda_1|^2 \mathfrak{R}((\mu \lambda_1)^{-1})
\end{aligned} $$
where we used the first expression for $d$ in \eqref{equ:expr_d} in the second-last equation.

Thus $\mathfrak{R}(\eta_1 \overline{d}) \geq | \eta_j|^2$ holds if and only if $- | \mu - \lambda_1|^2 \mathfrak{R}((\mu \lambda_1)^{-1}) \geq 0$, which is the case if and only if $\mathfrak{R}(\lambda_1 \mu) \leq 0$ as $(\lambda_1 \mu)^{-1}$ and $\lambda_1 \mu$ are located in the same half plane. For $\eta_2 = \mu^{-1} - \lambda_1$ we obtain analogously
$$ \mathfrak{R} \big( \eta_2 \overline{d} \big) - |\eta_2|^2 = - | \lambda_1 \mu -1 |^2 \mathfrak{R} \big( (\overline{\lambda} \mu)^{-1} \big)$$
and so $\mathfrak{R}(\eta_2 \overline{d}) \geq | \eta_j|^2$ holds if and only if $\mathfrak{R}((\overline{\lambda} \mu)^{-1}) \leq 0$. This, in turn, holds if and only if $\mathfrak{R}(\overline{\lambda}_1 \mu) \leq 0$. In conclusion, we have proven the following theorem.

\begin{theorem} \label{thm:sharp_bound}
Let $S \in \MM_{2n}(\CC)$ be symplectic with $\lambda_1, \lambda_1^{-1} \in \sigma(S)$ and let $\mu \in \CC \setminus \lbrace 0 \rbrace$ be given. Let $x_1, x_2 \in \CC^{2n}$ be normalized eigenvectors for $\lambda_1$ and $\lambda_1^{-1}$, respectively, with $x_1^TJx_2 \neq 0$ and $X \in \MM_{2n \times 2}(\CC)$ as in \eqref{equ:X_normalized}.
Define $$ \Phi := \frac{1}{|x_1^TJx_2| \cdot \Vert S \Vert_F} \; \left( = \frac{\kappa(S, \lambda_1)}{\Vert S \Vert_F} \; \textnormal{if $\lambda_1$ is simple} \right)$$
\begin{enumerate}
\item[$(i)$] Let $\hat{S}_1 = S + XR_1X^TJ^TS$ be constructed according to Theorem \ref{thm:symp_change1} with $R_1$ from \eqref{equ:min_R}. Whenever $\mathfrak{R}(\lambda_1 \mu) \leq 0$, then
$$ \frac{|\lambda_1 - \mu|}{| \lambda_1|} \big( |\lambda_1 + \mu^{-1}| \Phi \big) \leq \frac{\Vert \hat{S}_1 - S \Vert_F}{\Vert S \Vert_F} \leq \frac{|\lambda_1 - \mu|}{| \lambda_1|} \big( |\lambda_1 - \mu^{-1}| \Phi \big).$$
If $\mathfrak{R}(\lambda_1 \mu) > 0$ the upper and lower bounds interchange.
\item[$(ii)$] Let $\hat{S}_2 = S + XR_2X^TJ^TS$ be constructed according to Theorem \ref{thm:symp_change1} with $R_2$ from \eqref{equ:min_R}. Whenever $\mathfrak{R}(\overline{\lambda}_1 \mu) \leq 0$, then
$$ \frac{|\lambda_1^{-1} - \mu|}{| \lambda_1^{-1}|} \big( |\lambda_1^{-1} + \mu^{-1}| \Phi \big) \leq \frac{\Vert \hat{S}_2 - S \Vert_F}{\Vert S \Vert_F} \leq \frac{|\lambda_1^{-1} - \mu|}{| \lambda_1^{-1}|} \big( |\lambda_1^{-1} - \mu^{-1}| \Phi \big).$$
If $\mathfrak{R}(\overline{\lambda_1} \mu) > 0$ the upper and lower bounds interchange.
\end{enumerate}
\end{theorem}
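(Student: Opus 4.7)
The plan is to consolidate the calculations already performed in the paragraphs leading up to the statement into a single proof with two parallel parts. First I would start from the identity $\Vert S-\hat S_j\Vert_F = \Vert X\tilde R_j X^T\Vert_F$ (with $\tilde R_j=R_j\Lambda^{-1}$) obtained via \eqref{equ:hat_S} and the unitary invariance of $\Vert\cdot\Vert_F$, then expand it via the trace as $\textnormal{tr}(Y^HY\tilde R_j\overline{Y^HY}\tilde R_j^H)/|x_1^TJx_2|^2$ with $Y=[\,x_1\;x_2\,]$, and evaluate the resulting $2\times 2$ trace. This yields the core identity
\[
|x_1^TJx_2|^2\,\Vert\hat S_j-S\Vert_F^2 \;=\; |\eta_j|^2+|\eta_j-d|^2+2|x_1^Hx_2|^2\,\mathfrak{R}\bigl(\eta_j(\overline{\eta}_j-\overline{d})\bigr),
\]
which is precisely \eqref{equ:estimate_bound}.

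Next I would exploit $0\le|x_1^Hx_2|^2\le 1$ (Cauchy--Schwarz on normalized vectors). Since the cross term $2|x_1^Hx_2|^2\mathfrak{R}(\eta_j(\overline{\eta}_j-\overline{d}))$ has a definite sign controlled by the scalar $\mathfrak R(\eta_j\overline d)-|\eta_j|^2$, choosing $|x_1^Hx_2|^2=1$ gives---via the factorization $|\eta_j+(\eta_j-d)|^2 = |2\eta_j-d|^2$---one extremal bound, while dropping the cross term entirely gives the other via $|\eta_j-(\eta_j-d)|^2=|d|^2$. Consequently, when $\mathfrak R(\eta_j\overline d)\le|\eta_j|^2$ the upper bound equals $|2\eta_j-d|^2$ and the lower equals $|d|^2$; the ordering swaps under the opposite sign.

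To convert this abstract sign condition into a criterion on $\lambda_1$ and $\mu$, I would substitute $\eta_1=\mu-\lambda_1$ together with $d=\tfrac{\mu-\lambda_1}{\lambda_1}(\lambda_1-\mu^{-1})$ and reduce $\mathfrak R(\eta_1\overline d)-|\eta_1|^2$ (after cancelling $|\mu-\lambda_1|^2$) to $-|\mu-\lambda_1|^2\,\mathfrak R((\lambda_1\mu)^{-1})$. Since $(\lambda_1\mu)^{-1}$ and $\lambda_1\mu$ share the same open half-plane, its sign matches that of $-\mathfrak R(\lambda_1\mu)$, which pins the $\mathfrak R(\lambda_1\mu)\le 0$ regime exactly to the orientation claimed in $(i)$. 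An analogous substitution $\eta_2=\mu^{-1}-\lambda_1$ produces $-|\lambda_1\mu-1|^2\mathfrak R((\overline{\lambda}_1\mu)^{-1})$, whose sign is governed by $\mathfrak R(\overline{\lambda}_1\mu)$, handling $(ii)$.

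Finally I would invoke the factorizations $2\eta_1-d=\tfrac{\mu-\lambda_1}{\lambda_1}(\lambda_1+\mu^{-1})$, $d=\tfrac{\mu-\lambda_1}{\lambda_1}(\lambda_1-\mu^{-1})$ in part $(i)$ and the dual formulas with $\lambda_1^{-1}$ in $(ii)$ to peel off the common prefactor $|\lambda_1-\mu|/|\lambda_1|$ (resp.\ $|\lambda_1^{-1}-\mu|/|\lambda_1^{-1}|$) from the residual factor $|\lambda_1\pm\mu^{-1}|$. Dividing by $|x_1^TJx_2|^2\Vert S\Vert_F^2$ produces the factor $\Phi^2$, and taking square roots delivers the stated inequalities. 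The only real obstacle is bookkeeping: one must carefully pair each of the two signs of $\mathfrak R(\eta_j\overline d)-|\eta_j|^2$ with the correct upper/lower bound and identify $\Phi\Vert S\Vert_F=1/|x_1^TJx_2|$; no new analytical idea beyond the calculations preceding the theorem is required.
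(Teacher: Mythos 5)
Your proposal reproduces the paper's proof exactly: the trace identity leading to \eqref{equ:estimate_bound}, the two-sided estimate using $0\le|x_1^Hx_2|^2\le 1$ together with the factorizations $|2\eta_j-d|^2$ and $|d|^2$, the reduction of the sign condition to $\mathfrak{R}(\lambda_1\mu)$ (resp.\ $\mathfrak{R}(\overline{\lambda}_1\mu)$), and the final rewriting of $|2\eta_j-d|$ and $|d|$ via \eqref{equ:expr_d} to extract $\Phi$ and the relative eigenvalue change. One small inaccuracy in the narration: the $|d|^2$ bound does not come from ``dropping the cross term entirely'' (that would leave $|\eta_j|^2+|\eta_j-d|^2$) but from replacing the nonnegative quantity $2|x_1^Hx_2|^2\mathfrak{R}(\eta_j(\overline{\eta}_j-\overline{d}))$ by $-2\mathfrak{R}(\eta_j(\overline{\eta}_j-\overline{d}))$, which is what produces the factorization $|\eta_j-(\eta_j-d)|^2$ you correctly cite; the intended argument is clear, however, and the proof is sound.
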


It is shown in Section \ref{sec:experiments} (see Fig. \ref{fig:upper}) that the bounds in Theorem \ref{thm:sharp_bound} are significantly sharper compared to the bounds in \eqref{equ:frob_bound_S1a} and \eqref{equ:frob_bound_S2a}.

\begin{remark} \label{rem:R1R2}
For any matrix $R = [r_{ij}] \in \MM_2(\CC)$ that satisfies the conditions \eqref{equ:symp_change1a} and \eqref{equ:symp_change1b} the bounds in \eqref{equ:main_bound} can easily be calculated.
However, there are several reasons for not considering other choices of $R$ (beside $R_1$ and $R_2$ from \eqref{equ:min_R}) in this section in detail:
\begin{enumerate}[(a)]
\item If $r_{11} \neq 0, r_{22} \neq 0$, there are two possibilities for $R$ whose entries $r_{12}$ and $r_{21}$ of $R$ depend on $c=r_{11}r_{22}$ through (one of) the zeros of $p(\eta)=0$, see \eqref{equ:eta_pol}. Thus, $r_{12}$ and $r_{21}$ involve the expression of a complex square root and there is no neat and compact expression for $\Vert R \Vert_F$ compared to \eqref{equ:frob_bound_S1a} and \eqref{equ:frob_bound_S2a} or to the formulas in Theorem \ref{thm:sharp_bound}. Furthermore, minimizing $\Vert S - \hat{S}\Vert_F$ with respect to the entries of $R$ under the side conditions \eqref{equ:symp_change1a} and \eqref{equ:symp_change1b} results in a difficult complex optimization problem for which the author is not aware of a closed form solution.
\item Among all matrices $R$ that satisfy the conditions \eqref{equ:symp_change1a} and \eqref{equ:symp_change1b} the matrices $R_1$ and $R_2$ from \eqref{equ:min_R} are the only possible choice when $\hat{S} = S+XRX^TJ^TS$ should inherit desirable properties (e.g. related to diagonalizability) from $S$. These distinguishing features of $R_1$ and $R_2$ are discussed in the upcoming sections.
\item All numerical experiments that have been performed indicate that rarely a matrix $R' \in \MM_2(\CC)$ different from $R_1$ and $R_2$ that satiesfies \eqref{equ:symp_change1a} and \eqref{equ:symp_change1b} was detected such that $\Vert \hat{S} - S \Vert_F/\Vert S \Vert_F$ for $\hat{S} = S+XR'X^TJ^TS$ was smaller than the minimum of $\Vert \hat{S}_1 - S \Vert/\Vert S \Vert_F$ and $\Vert \hat{S}_2 - S \Vert_F/\Vert S \Vert_F$. This is visualized in Section \ref{sec:experiments}, see Figure \ref{fig:surface}. 
\end{enumerate}

\end{remark}

\section{Segre characteristics and commutativity relations}

In this section we discuss how the Segre characteristics of eigenvalues are effected by a change of a symplectic matrix $S \in \MM_{2n}(\CC)$ to $\hat{S} \in \MM_{2n}(\CC)$ according to Theorem \ref{thm:symp_change1}. In particular, we will show that the Segre characteristics of the eigenvalues of $S$ and $\hat{S}$ are either the same or connected in a direct way. Furthermore, we make a statement on eigenvectors of $S$ and $\hat{S}$ that remain unchanged. Notice that, in the context of Theorem \ref{thm:rado}, the eigenvectors of  $A$ and $\hat{A} = A + XC^T$ are in general all different and \textit{not} related in an immediate fashion \cite{Bru2012} if no further restrictions are imposed on the form of $C$.  In this section we show that, in the structure-preserving context of Theorem \ref{thm:symp_change1}, the particular form of $C$ allows for some explicit statements. Furthermore, we derive statements on the diagonalizability of $\hat{S}$ and the simultaneous diagonalizability of $S$ and $\hat{S}$. To this end, we begin in Section \ref{sec:commutativity} with a result on the commutativity of $S$ and $\hat{S}$.

\subsection{The Commutativity of $S$ and $\hat{S}$}
\label{sec:commutativity}

Recall that the matrix $\hat{S}$ in \eqref{equ:symp_change1} can also be expressed as
\begin{equation} \hat{S} = \big( I_{2n} + XRX^TJ^T \big) S. \label{equ:mult_decomp} \end{equation}
Since $\hat{S}$ and $S$ are both symplectic, the matrix $\hat{S}S^{-1} = \hat{S}S^{\star} = I_{2n} + XRX^TJ^T$ is symplectic, too. 
As for any $A,B \in \MM_n(\CC)$ the matrices $AB$ and $BA$ always have the same eigenvalues \cite{johnson96}, beside $\hat{S}$, we may also define the symplectic matrix $\tilde{S} := S \big( I_{2n} + XRX^TJ^T \big) \in \MM_{2n}(\CC)$
that solves the eigenvalue modification problem stated in Section \ref{sec:intro}.
A question naturally arising is whether there is a connection between $\hat{S}$ from \eqref{equ:mult_decomp} and $\tilde{S}$. Such a connection is revealed in Theorem \ref{thm:commute} which shows a distinguishing feature of the matrices from \eqref{equ:min_R} among all matrices $R$ that satisfy \eqref{equ:symp_change1a} and \eqref{equ:symp_change1b}, see Remark \ref{rem:R1R2} (b). The result from Theorem \ref{thm:commute} will be used when the diagonalizability of $\hat{S}$ is investigated in Section \ref{sec:eigenvectors}.

\begin{theorem} \label{thm:commute}
Let $S \in \MM_{2n}(\CC)$ be symplectic with $\lambda_1 \in \sigma(S)$ and 
assume $\hat{S} = S + XRX^TJ^TS$ has been constructed according to Theorem \ref{thm:symp_change1}. 
Then the following is true:
\begin{enumerate}
\item In case $\lambda_1 \neq \pm 1$,
\begin{equation} \hat{S} = \big( I_{2n} + XRX^TJ^T \big)S = S \big( I_{2n} + XRX^TJ^T \big) = \tilde{S} \label{equ:comm_relation} \end{equation}
holds if and only if $R = [r_{ij}]_{ij} \in \MM_2(\CC)$ is one of the matrices in \eqref{equ:min_R}.
\item In case $\lambda_1 = \pm 1$, \eqref{equ:comm_relation} holds for any $R = [r_{ij}]_{ij} \in \MM_2(\CC)$ satisfying \eqref{equ:symp_change1a} and \eqref{equ:symp_change1b}.
\end{enumerate}
\end{theorem}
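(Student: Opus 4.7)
The plan is to compute the difference $\hat{S} - \tilde{S} = XRX^TJ^TS - SXRX^TJ^T$ and express it in terms of $R$ and $\Lambda = \diag(\lambda_1,\lambda_1^{-1})$ only, so that commutativity reduces to a $2\times 2$ matrix identity. First I would use the fact that $SX = X\Lambda$ (by construction of $X$ as eigenvectors of $S$) together with the symplectic identity $S^{-T}J = JS$ (equivalent to $S^TJS = J$) to derive the dual relation $X^TJ^TS = \Lambda^{-1}X^TJ^T$. Substituting these gives
\[
\hat{S} - \tilde{S} \;=\; X\bigl(R\Lambda^{-1} - \Lambda R\bigr)X^TJ^T.
\]

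Next I would argue that the factors flanking $R\Lambda^{-1}-\Lambda R$ are cancellable. The matrix $X \in \MM_{2n\times 2}(\CC)$ has full column rank (by linear independence of $x_1,x_2$), so $X$ admits a left inverse; concretely, $X^TJX = J_2$ yields $-J_2 X^TJ$ as one. Similarly $X^TJ^T$ has full row rank (since $J$ is invertible and $X$ has rank $2$), hence a right inverse. Consequently,
\[
\hat{S} = \tilde{S} \iff R\Lambda^{-1} = \Lambda R.
\]

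I would then analyze this last equation entrywise. Writing out $R\Lambda^{-1}$ and $\Lambda R$, the $(1,2)$ and $(2,1)$ entries agree automatically, while the diagonal entries produce $r_{11}(\lambda_1-\lambda_1^{-1})=0$ and $r_{22}(\lambda_1-\lambda_1^{-1})=0$. If $\lambda_1=\pm 1$, then $\Lambda=\pm I_2$ is scalar and commutes with every $R$, establishing part (2). If $\lambda_1 \neq \pm 1$, the equation forces $r_{11}=r_{22}=0$. To complete part (1), I would combine this with conditions \eqref{equ:symp_change1a} and \eqref{equ:symp_change1b}: with $r_{11}r_{22}=0$, the parameterization already developed in Section~\ref{sec:min_norm} (equations \eqref{equ:r12r21}--\eqref{equ:eta_pol}) shows that $\eta$ must satisfy $p(\eta)=0$ with $\eta_1=\mu-\lambda_1$ or $\eta_2=\mu^{-1}-\lambda_1$, yielding precisely the two matrices $R_1,R_2$ of \eqref{equ:min_R}. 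Conversely, $R_1$ and $R_2$ have zero diagonal by inspection, so $R\Lambda^{-1}=\Lambda R$ holds.

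I expect the only subtlety to be the cancellation step: one has to be explicit that $X$ can be cancelled on the left and $X^TJ^T$ on the right, using that both have full rank $2$, rather than assuming $X$ is invertible (it is not — it is rectangular). Everything else is a direct computation, with the entry-wise analysis of $R\Lambda^{-1}=\Lambda R$ being essentially trivial.
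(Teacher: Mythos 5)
Your proof is correct and follows essentially the same route as the paper's: reduce $\hat S=\tilde S$ to the $2\times 2$ identity $R\Lambda^{-1}=\Lambda R$ (equivalently $\Lambda R\Lambda=R$) by using $SX=X\Lambda$ together with the dual relation $X^{T}J^{T}S=\Lambda^{-1}X^{T}J^{T}$ and then cancelling the full-rank factors $X$ and $X^{T}J^{T}$ via pseudoinverses, and finally read off $r_{11}=r_{22}=0$ when $\lambda_1\neq\pm1$. The only cosmetic differences are that you work with the difference $\hat S-\tilde S$ rather than the equality, and invoke $X^TJ^TS=\Lambda^{-1}X^TJ^T$ directly instead of first multiplying by $J$ on the right as the paper does; neither changes the substance.
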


\begin{proof}
First, notice that $\hat{S} = \tilde{S}$ is equivalent to
\begin{equation} XRX^TJ^TS = S XRX^TJ^T. \label{equ:proof_comm0} \end{equation}
Multiplying both equations with $J$ (from the right) and using the relations $SX = X \Lambda$ (where $\Lambda = \textnormal{diag}(\lambda_1, \lambda_1^{-1})$) and $J^TSJ = S^{-T}$ yields $ XRX^TS^{-T} = X \Lambda RX^T.$ Moreover, $X^TS^{-T} = (S^{-1}X)^T = (X \Lambda^{-1})^T = \Lambda^{-1}X^T$ and, equivalently to \eqref{equ:proof_comm0}, it suffices to investigate the equation
\begin{equation} XR \Lambda^{-1}X^T = X \Lambda R X^T. \label{equ:proof_comm1} \end{equation}
Now, as $X \in \MM_{2n \times 2}(\CC)$ has full rank, \eqref{equ:proof_comm1} is (by the multiplication with $X^+$ from the left and $(X^T)^+$ from the right) equivalent to $R \Lambda^{-1} = \Lambda R$, that is, $\Lambda R \Lambda = R$. For $R = [r_{ij}]_{ij}$ we obtain
$$ \Lambda R \Lambda = \begin{bmatrix} \lambda_1 & 0 \\ 0 & \lambda_1^{-1} \end{bmatrix} \begin{bmatrix} r_{11} & r_{12} \\ r_{21} & r_{22} \end{bmatrix}  \begin{bmatrix} \lambda_1 & 0 \\ 0 & \lambda_1^{-1} \end{bmatrix} = \begin{bmatrix} \lambda_1^2r_{11} & r_{12} \\ r_{21} & \lambda_1^{-2}r_{22} \end{bmatrix}.$$
This shows that $\Lambda R \Lambda = R$ holds, in case $\lambda_1 \neq \pm 1$, if and only if $r_{11}=r_{22}=0$. The two possibilities for $R$ that satisfy the conditions \eqref{equ:symp_change1a} and \eqref{equ:symp_change1b} when $r_{11} = r_{22}=0$ are the matrices in \eqref{equ:min_R}. Furthermore, if $\lambda_1 = \pm 1$, the equation always holds. This completes the proof.
\end{proof}

Theorem \ref{thm:commute} shows that, in general (i.e. for $\lambda_1 \neq \pm 1$), only the two possible choices for $R$ in \eqref{equ:min_R} produce commutativity of $S$ and $I_{2n} + XRX^TJ^T$ (i.e. to have $\hat{S} = \tilde{S}$). In the special case $\lambda_1 = \pm 1$, any matrix $R$ determined from \eqref{equ:symp_change1a} and \eqref{equ:symp_change1b} will cause this commutativity relation.

\subsection{Segre characteristics}
\label{sec:eigenvectors}

Let $S \in \MM_{2n}(\CC)$ be symplectic with eigenvalues $\lambda_1, \lambda_1^{-1},$ $\ldots , \lambda_n, \lambda_n^{-1}$ and let $\mu \in \CC \setminus \lbrace 0 \rbrace$ be given. To analyse the consequences of the change $S \mapsto \hat{S} = S + XRX^TJ^TS$ on the Segre characteristics of the eigenvalues of $S$ and $\hat{S}$, we discuss the cases of $\lambda_1, \lambda_1^{-1}$ (the eigenvalues that are changed), $\mu, \mu^{-1}$ (the values $\lambda_1$ and $\lambda_1^{-1}$ are changed to) and all other eigenvalues (which are the same for $S$ and $\hat{S}$) separately.
As before, let $x_1, x_2 \in \CC^{2n}$ be eigenvectors for $\lambda_1$ and $\lambda_1^{-1}$, respectively, normalized such that $X^TJ_{2n}X = J_2$ for $X = [ \; x_1 \; x_2 \; ] \in \MM_{2n \times 2}(\CC)$ and assume $\hat{S} = S + XRX^TJ^TS$ has been constructed as in  Theorem \ref{thm:symp_change1}.

We first consider eigenvalues different from $\lambda_1, \lambda_1^{-1}, \mu$ and $\mu^{-1}$. These eigenvalues and their algebraic multiplicities are the same for $S$ and $\hat{S}$ and we show that their eigenspaces and Jordan chains (thus, in consequence, their Segre characteristics) remain completely unchanged. To prove this, we need the following fact about the matrix $S$ and its (generalized) eigenvectors (see also \cite[Sec.\,2]{laub}): assume that $\lambda$ is some eigenvalue of $S$ different from $\lambda_1$ and $\lambda_1^{-1}$ and let $y_1,y_2, \ldots , y_p \in \CC^{2n}$ ($p \geq 1$) be a Jordan chain for $S$ and $\lambda$, i.e. it holds that $(S - \lambda I_{2n})y_1 = 0$ and $(S - \lambda I_{2n})y_{k+1} = y_k$ for $k=1, \ldots , p-1$. Then $X^TJy_k = 0$ follows for any $k=1, \ldots , p.$ To see this, first consider the eigenvector $y_1$ of $S$ for $\lambda$. We have
$$\lambda_1 x_1^TJy_1 = x_1^TS^TJy_1 = x_1^TJJ^TS^TJy_1 = x_1^TJS^{-1}y_1 = \lambda^{-1} x_1^TJy_1$$
This shows that $x_1^TJy_1 = 0$ if $\lambda \neq \lambda_1^{-1}$. Similarly, $x_2^TJy_1 = 0$ follows for $\lambda \neq \lambda_1$. Now assume that $x_i^TJy_\ell = 0$ holds for $i=1,2$ and $\ell = 1, \ldots , k$. For $(S - \lambda I_{2n})y_{k+1} = y_k$ we thus obtain
$$ \begin{aligned}
0 = x_1^TJy_k = x_1^TJ(S - \lambda I_{2n})y_{k+1} &= x_1^TJSy_{k+1} - \lambda x_1^TJy_{k+1} \\
&= x_1^TS^{-T}Jy_{k+1} - \lambda x_1^TJy_{k+1} \\
&= \lambda_1^{-1} x_1^TJy_{k+1} - \lambda x_1^TJy_{k+1} \\
&= (\lambda_1^{-1} - \lambda) x_1^TJy_{k+1}.
\end{aligned} $$
Therefore, again $x_1^TJy_{k+1} = 0$ follows whenever $\lambda \neq \lambda_1^{-1}$. With the same reasoning we obtain $x_2^TJy_{k+1} = 0$ for $\lambda \neq \lambda_1$. In conclusion we have $X^TJy_k = 0$ for any $k=1, \ldots , p$ whenever $\lambda_1 \neq \lambda \neq \lambda_1^{-1}.$

\begin{lemma} \label{lem:eigenvectors_0}
Let $S \in \MM_{2n}(\CC)$ be symplectic with $\lambda_1 \in \sigma(S)$ and let $\mu \in \CC \setminus \lbrace 0 \rbrace$ be given. Suppose that $\hat{S} \in \MM_{2n}(\CC)$ has been constructed according to Theorem \ref{thm:symp_change1}. Then for any $\lambda \in \sigma(\hat{S})$ which is neither equal to $\lambda_1$ or $\lambda_1^{-1}$ nor equal to $\mu$ or $\mu^{-1}$ the Segre characteristics of $\lambda$ as an eigenvalue of $S$ and $\hat{S}$ and their corresponding Jordan chains, respectively, are identical.
\end{lemma}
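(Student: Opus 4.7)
The plan is to leverage the observation established immediately before the lemma, namely that $X^TJy_k = 0$ for every vector $y_k$ in a Jordan chain of $S$ at any eigenvalue $\lambda \notin \{\lambda_1,\lambda_1^{-1}\}$. From this I will show that $\hat{S}$ and $S$ actually \emph{agree} on the generalized eigenspace of $S$ at $\lambda$, which is the strongest possible form of the claim.

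Concretely, let $y_1,\dots,y_p \in \CC^{2n}$ be a Jordan chain of $S$ at such a $\lambda$, so that $Sy_1 = \lambda y_1$ and $Sy_{k+1} = \lambda y_{k+1} + y_k$. Using $J^T = -J$ together with $X^TJy_k = 0$ for all $k$, I compute
\[
X^TJ^T S y_k \;=\; -X^TJ(\lambda y_k + y_{k-1}) \;=\; -\lambda X^TJy_k - X^TJy_{k-1} \;=\; 0
\]
(with the convention $y_0 = 0$). Plugging this into the defining formula $\hat{S} = S + XRX^TJ^TS$ yields $\hat{S}y_k = Sy_k$ for every $k$. Hence the identical sequence $y_1,\dots,y_p$ is also a Jordan chain of $\hat{S}$ at $\lambda$, with the same chain relations.

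To upgrade this to equality of Segre characteristics, I will argue as follows. Rado's theorem (Theorem \ref{thm:rado}) implies that the algebraic multiplicity of $\lambda$ is the same for $S$ and $\hat{S}$, since $\lambda \notin \{\lambda_1,\lambda_1^{-1},\mu,\mu^{-1}\}$. The union of all Jordan chains of $S$ at $\lambda$ spans the generalized eigenspace $G_\lambda(S)$, whose dimension equals that algebraic multiplicity. By the previous step, every such vector lies in $G_\lambda(\hat{S})$, and $\hat{S}$ acts on $G_\lambda(S)$ exactly as $S$ does. Comparing dimensions gives $G_\lambda(S) = G_\lambda(\hat{S})$, and the restrictions $S|_{G_\lambda(S)}$ and $\hat{S}|_{G_\lambda(\hat{S})}$ coincide as linear maps. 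Therefore their Jordan normal forms — hence the Segre characteristics of $\lambda$ for $S$ and for $\hat{S}$ — are identical, and the Jordan chains are literally the same.

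I expect no serious obstacle here: the pre-lemma identity $X^TJy_k = 0$ does all the heavy lifting, and the only care needed is the sign coming from $J^T = -J$ and the bookkeeping in the inductive Jordan-chain relation. The mild subtlety is not to confuse "containment of Jordan chains" with "equality of Segre characteristics"; this is resolved cleanly by the dimension-matching argument via Rado.
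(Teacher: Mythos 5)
Your proposal is correct and follows essentially the same path as the paper: use the pre-lemma fact $X^TJy_k = 0$ to show $\hat{S}y_k = Sy_k$ along Jordan chains, and close the argument via the preserved algebraic multiplicity. You are slightly more explicit than the paper about the final dimension-counting step (equality of generalized eigenspaces rather than just containment of chains), which is a welcome clarification but not a different method.
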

\begin{proof}
Assume $\lambda \in \sigma(\hat{S})$ is neither equal to $\lambda_1$ or $\lambda_1^{-1}$ nor equal to $\mu$ or $\mu^{-1}$. By construction of $\hat{S}= S + XRX^TJ^TS$, $\lambda$ is an eigenvalue of both $S$ and $\hat{S}$ with the same algebraic multiplicities. Whenever $y_1 \in \CC^{2n}$ is an eigenvector of $S$ for $\lambda$ it is also an eigenvector of $\hat{S}$ for $\lambda$ since $X^TJy_1 = 0$, which implies
\begin{equation} \hat{S}y_1 = Sy_1 + XRX^TJ^TSy_1 = Sy_1 - \lambda XRX^TJy_1 = Sy_1 = \lambda y_1. \label{equ:single_eig} \end{equation}
Next, let $y_1, \ldots , y_p \in \CC^{2n}$ be a Jordan chain for $S$ and $\lambda$. Then 
$$ \begin{aligned}
\big( \hat{S} - \lambda I_{2n} \big) y_{i+1} &= \big( S + XRX^TJ^TS \big) y_{i+1} - \lambda y_{i+1} \\
&= \big( y_i + \lambda y_{i+1} \big) + XRX^TJ^TSy_{i+1} - \lambda y_{i+1} \\ &= y_i - XRX^TJ(y_i + \lambda y_{i+1}) = y_i
\end{aligned} $$
since $X^TJy_k=0$ for any $y_k$, $k=1, \ldots , p$, from the Jordan chain. Inductively, this shows that $y_1, \ldots , y_p$ remains to be a Jordan chain of $\hat{S}$ for $\lambda$. Therefore, the Segre characteristic for $\lambda$ of $S$ and $\hat{S}$ and the corresponding Jordan chains are the same.
\end{proof}

Next we consider $\lambda_1$ and $\lambda_1^{-1}$. When $S \in \MM_{2n}(\CC)$ is transformed to $\hat{S} = S + XRX^TJ^TS$ and (one instance of) $\lambda_1, \lambda_1^{-1}$ is replaced by $\mu$ and $\mu^{-1}$, the Segre characteristic of $\lambda_1$ for $\hat{S}$ is necessarily different from its Segre characteristic for $S$ due to the eigenvalue modification that has taken place (if $\lambda_1$ is a simple eigenvalue of $S$, then it is not even an eigenvalue of $\hat{S}$ anymore). However, if the algebraic multiplicity of $\lambda_1$ as an eigenvalue of $S$ is $\geq 2$, then the Segre characteristics of $\lambda_1$ as an eigenvalue of $S$ and $\hat{S}$ are connected in an easy fashion (see Theorem \ref{lem:eigval_lambda} below). This is obviously false in the general context of Rado's Theorem, where nontrivial Jordan blocks may arise, as the following counterexample for $A = I_4$ and $\hat{A} = A + XC^T$ shows:
$$ \hat{A} = \begin{bmatrix} 1 & & & \\ & 1 & & \\ & & 1 & \\ & & & 1 \end{bmatrix} + \begin{bmatrix} 1 & 0 \\0 & 1 \\ 0 & 0 \\ 0 & 0 \end{bmatrix} \begin{bmatrix} 1 & 0 & 0 & 0 \\ 0 & 0 & 1 & 0 \end{bmatrix} = \begin{bmatrix} 2 & & & \\ & 1 & 1 & \\ & & 1 & \\ & & & 1 \end{bmatrix}.$$
In this example, the Segre characteristic of $1 \in \sigma(A)$ is $((1,1,1,1))$ while it is $((2,1))$ for $\hat{A}$.

\begin{theorem}  \label{lem:eigval_lambda}
Let $S \in \MM_{2n}(\CC)$ be symplectic with $\lambda_1 \in \sigma(S)$ and let $\mu \in \CC \setminus \lbrace 0, \lambda_1, \lambda_1^{-1} \rbrace$ be given. Suppose that $\hat{S} = S + XRX^TJ^TS \in \MM_{2n}(\CC)$ has been constructed according to Theorem \ref{thm:symp_change1}. Then the following hold:
\begin{enumerate}
\item[$(i)$] If the Segre characteristic of $\lambda_1 \neq \pm 1$ as an eigenvalue of $S$ is
\begin{equation} ((s_k, s_{k-1}, \ldots , s_2, s_1)) \label{equ:segre} \end{equation}
with\footnote{Notice that for Theorem \ref{thm:symp_change1} to be applicable to $\lambda_1 \neq \pm 1$, $s_1=1$ is a necessary condition according to Theorem \ref{thm:nec_condition}. If $\lambda_1 = \pm 1$, then $s_1=1$ implies $s_2 = 1$ since then Jordan blocks of a particular size must appear an even number of times in the Jordan structure of $S$.} $s_k \geq s_{k-1} \geq \cdots \geq s_2 \geq s_1 = 1$, then the Segre characteristic of $\lambda_1$ as an eigenvalue of $\hat{S}$ is $((s_{k}, s_{k-1}, \ldots , s_2))$. Moreover, if $\lambda_1 = \pm 1$ and \eqref{equ:segre} is its Segre characteristic of $S$ with$^7$ $s_2 = s_1 = 1$, then the Segre characteristic of $\lambda_1$ as an eigenvalue of $\hat{S}$ is $((s_k, s_{k-1}, \ldots , s_3))$.
\item[$(ii)$] Let $\mu \notin \sigma(S)$. Then the Segre characteristic of $\mu$ as an eigenvalue of $\hat{S}$ is always $((1))$ if $\mu \neq \mu^{-1}$. If $\mu = \mu^{-1}$ its Segre characteristic is $((1,1))$ if and only if $R = R_1$ or $R= R_2$ from \eqref{equ:min_R}, otherwise it is $((2))$.
\end{enumerate}
\end{theorem}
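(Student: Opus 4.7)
My plan is to exhibit a direct-sum decomposition $\CC^{2n} = \spann(x_1,x_2) \oplus U$ that is invariant under both $S$ and $\hat{S}$, reduce the problem to a block-diagonal situation, and then read off the Segre characteristics from each summand separately. Set
$U := \{u \in \CC^{2n} : x_1^T J u = x_2^T J u = 0\}$,
the annihilator of $\spann(x_1,x_2)$ under the bilinear form $B(u,v) = u^T J v$. The normalization $x_1^T J x_2 = 1$ immediately forces $\spann(x_1,x_2) \cap U = \{0\}$, and since $U$ is cut out by two linearly independent linear conditions, a dimension count gives $\CC^{2n} = \spann(x_1,x_2) \oplus U$. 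Using $J S = S^{-T} J$ together with $S^{-1}x_1 = \lambda_1^{-1}x_1$ and $S^{-1}x_2 = \lambda_1 x_2$ gives $SU \subseteq U$; then, for any $u \in U$, $Su \in U$ yields $X^T J^T (Su) = 0$ and hence $\hat{S}u = Su + XR X^T J^T Su = Su$. So $\hat{S}|_U = S|_U$, while the restriction of $\hat{S}$ to the 2-plane in the basis $\{x_1,x_2\}$ is exactly the matrix $\Omega$ of \eqref{equ:omega1}. Getting this splitting to cooperate cleanly with the rank-two update is the main technical point of the argument; once it is in place, everything else reduces to bookkeeping.

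From this block-diagonal picture both $S$ and $\hat{S}$ are similar to $S|_U$ combined with, respectively, $\Lambda = \diag(\lambda_1,\lambda_1^{-1})$ and $\Omega$, so the Segre characteristic of any eigenvalue is the disjoint union of the characteristics arising from each summand. For part $(i)$ with $\lambda_1 \neq \pm 1$, the hypothesis $((s_k,\ldots,s_2,1))$ for $\lambda_1$ on $S$ splits as $((1))$ from $\Lambda$ and $((s_k,\ldots,s_2))$ from $S|_U$; the assumption $\mu \notin \{\lambda_1,\lambda_1^{-1}\}$ ensures $\Omega$ contributes no $\lambda_1$-block, so the Segre characteristic of $\lambda_1$ for $\hat{S}$ is $((s_k,\ldots,s_2))$. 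When $\lambda_1 = \pm 1$ the trivial blocks of $x_1$ \emph{and} $x_2$ are both absorbed into the 2-plane summand (contributing $((1,1))$ there), leaving $((s_k,\ldots,s_3))$ on $U$, and the same reasoning delivers the stated Segre characteristic for $\hat{S}$.

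For part $(ii)$, the assumption $\mu \notin \sigma(S)$ also gives $\mu \notin \sigma(S|_U)$, so the Segre structure of $\mu$ as an eigenvalue of $\hat{S}$ is entirely determined by the $2 \times 2$ matrix $\Omega$. If $\mu \neq \mu^{-1}$ then $\Omega$ has two distinct eigenvalues and is diagonalizable, giving Segre characteristic $((1))$. If $\mu = \mu^{-1}$ (so $\mu = \pm 1$), then both eigenvalues of $\Omega$ coincide with $\mu$, and $\Omega$ is either equal to $\mu I_2$ (Segre $((1,1))$) or similar to a single nontrivial Jordan block (Segre $((2))$). Reading off \eqref{equ:omega1}, $\Omega = \mu I_2$ is equivalent to $r_{11} = r_{22} = 0$ together with $r_{12} = \mu \lambda_1^{-1} - 1$ and $r_{21} = 1 - \mu \lambda_1$; substituting $\mu^2 = 1$ shows these are exactly the entries produced by $R_1$ and $R_2$ in \eqref{equ:min_R}, which in this case coincide. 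Any other admissible $R$ has a nonzero diagonal entry and hence yields a nondiagonal $\Omega$ with a repeated eigenvalue, forcing the Jordan block $((2))$.
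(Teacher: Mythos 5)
Your argument is correct, and it is a genuinely different (and arguably cleaner) route than the one taken in the paper. You block-diagonalize both $S$ and $\hat{S}$ over the splitting $\CC^{2n} = \spann(x_1,x_2) \oplus U$, where $U$ is the $J$-annihilator of $\spann(x_1,x_2)$: the normalization $x_1^TJx_2 = 1$ gives $\spann(x_1,x_2) \cap U = \{0\}$ and $\dim U = 2n-2$; the symplectic relation $JS = S^{-T}J$ together with $S^{-1}x_1 = \lambda_1^{-1}x_1$, $S^{-1}x_2 = \lambda_1 x_2$ gives $SU\subseteq U$; and then for $u\in U$ one has $X^TJ^TSu = 0$ so $\hat{S}u = Su$. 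Computing $\hat{S}x_1$ and $\hat{S}x_2$ directly shows $\spann(x_1,x_2)$ is also $\hat{S}$-invariant with matrix $\Omega$ in the basis $\{x_1,x_2\}$. After that, the Segre characteristics of $S$ and $\hat{S}$ are simply the disjoint unions of those of the two blocks, and all the claims in $(i)$ and $(ii)$ are bookkeeping, including the identification of $\Omega = \mu I_2$ with $R = R_1 = R_2$ when $\mu=\pm 1$. The paper instead assembles an explicit $2n\times 2p$ matrix $U$ of (generalized) eigenvectors for $\lambda_1$ and $\lambda_1^{-1}$, shows $\hat{S}U = UP$ for an explicit block-triangular $P$ whose upper-left $2\times 2$ block is $\Omega$, and reads off the Jordan blocks from $P$; this requires a Jordan basis adapted to $x_1$ and $y_1=x_2$ and some care about which columns of $X^TJ^TU$ vanish. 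Your invariant-subspace argument avoids constructing a Jordan basis entirely, gets the $\hat S\vert_U = S\vert_U$ identity for free, and makes the ``union of Segre characteristics'' step transparent; the paper's approach is more explicit but also more entangled with basis choices. Both approaches rely on essentially the same cancellation $X^TJ^T(\cdot)=0$ on the complement of $\spann(x_1,x_2)$.
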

\begin{proof}

$(i)$ We first assume $\lambda_1 \neq \lambda_1^{-1}$. According to the Segre characteristic $((s_k, \ldots , s_1))$ of $\lambda_1 \in \sigma(S)$ there are $k \geq 1$ Jordan blocks $L_k, \ldots , L_1$ of sizes $s_k, \ldots , s_1$. As $s_1 = 1$ let $x_1$ be the corresponding eigenvector. We denote the generalized eigenvectors corresponding to the $\ell$-th Jordan block $L_\ell$ by $x_1^{\ell}, \ldots , x_{s_\ell}^{\ell}$ and set $X_\ell = [\, x_1^{\ell} \; \cdots \; x_{s_\ell}^\ell \, ]$ and $\tilde{X} := [ \, X_2 \; \cdots \; X_k \, ].$ Since $\lambda_1^{-1} \in \sigma(S)$ has the same Segre characteristic as $\lambda_1$, there are also $k$ Jordan blocks $G_k, \ldots , G_1$ of $S$ for $\lambda_1^{-1}$ of sizes $s_k, \ldots , s_1$. Let $y_1$, $Y_\ell = [\, y_1^{\ell} \; \cdots \; y_{s_\ell}^\ell \, ]$ and $\tilde{Y} := [ \, Y_2 \; \cdots \; Y_k \, ]$ be defined analogously from the (generalized) eigenvectors for $\lambda_1^{-1}$. We now define the matrix $U := [ \, x_1 \; y_1 \; \tilde{X} \; \tilde{Y} \, ] \in \MM_{2n \times 2p}(\CC)$.
Then the matrix $\hat{S} = S + XRX^TJ^TS$ can be written as
$$ \hat{S} = S + \begin{bmatrix} x_1 & y_1 & \tilde{X} & \tilde{Y} \end{bmatrix} \begin{bmatrix} r_{11} & r_{12} \\ r_{21} & r_{22} \\ 0 & 0 \\ \vdots & \vdots \\ 0 & 0  \end{bmatrix} X^TJ^TS =: S + U \tilde{R} X^TJ^TS. 
$$
As $SU = U P'$ for $P' := \textnormal{diag}(\lambda_1, \lambda_1^{-1}, L_2, \ldots , L_k, G_2, \ldots , G_k)$ we therefore obtain $\hat{S}U = SU + U \tilde{R} X^TJ^TSU$ and thus
$\hat{S}U =U (P' + \tilde{R}X^TJ^TU P' )$.
Now set $P = [p_{ij}]_{ij} := P' + \tilde{R}X^TJ^TU P'$ and notice that the third to last row of $\tilde{R}X^TJ^TU P'$ are identically zero (due to the form of $\tilde{R}$). Therefore, the form of $P$ can be explicitly determined (with $\star$ indicating zero or nonzero entries that are not of further interest):
\begin{equation} P = \left[ \begin{array}{cc|ccc|ccc}
\lambda_1(1+r_{12}) & - \lambda_1^{-1}r_{11} & \star & \cdots & \star & \star & \cdots & \star \\
\lambda_1r_{22} & \lambda_1^{-1}(1-r_{21}) & \star & \cdots & \star & \star & \cdots & \star \\ \hline
0 & 0 & L_2 & & & & & \\
\vdots & \vdots & & \ddots & & & 0 & \\
\vdots & \vdots & & & L_k & & & \\ \hline
\vdots & \vdots & & & & G_2 & & \\
\vdots & \vdots & & 0 & &  & \ddots & \\
0 & 0& & & &  &  & G_k \\
 \end{array} \right]. \label{equ:P} \end{equation}

Now, its is easily seen that $L_2, \ldots , L_k, G_2, \ldots , G_k$ are part of the Jordan structure of $P$, hence they also arise in the Jordan structure of $\hat{S}$. This shows that the Segre characterstic of $\lambda_1 \neq \pm 1 \in \sigma(\hat{S})$ and $\lambda_1^{-1} \in \sigma(\hat{S})$ is both $((s_k, s_{k-1}, \ldots , s_2))$. If $\lambda = \pm 1$ the proof follows the same lines without the use of $\tilde{Y}$.
To prove $(ii)$ we note that the upper-left $2 \times 2$ block of $P$ is exactly $\Omega$ from \eqref{equ:omega1}, hence its eigenvalues are $\mu, \mu^{-1}$. If $\mu \neq \mu^{-1}$, then $\Omega$ is semisimple. Therefore, we obtain the Segre characteristic of $\mu$ and $\mu^{-1}$ as eigenvalues of $\hat{S}$ both as $((1))$. On the other hand, if $\mu = \mu^{-1}$, then $\Omega$ is semisimple if and only if its minimal polynomial is $p(z) = z - \mu$. Now
$$p(\Omega') = \begin{bmatrix} \lambda_1(1+r_{12}) - \mu & - \lambda_1^{-1}r_{11} \\ \lambda_1 r_{22} & \lambda_1^{-1}(1-r_{21}) - \mu \end{bmatrix}.$$
Thus, for $p(\Omega)=0$ we must have $r_{11}=r_{22}=0$ and the only possible choices for $\Omega$ to be semisimple are $R_1$ and $R_2$ from \eqref{equ:min_R}. It is easy to check that in fact both choices result in $p(\Omega)=0$. This proves that the Segre characteristic of $\mu$ as an eigenvalue of $\hat{S}$ is $((1,1))$ if $R=R_1,R_2$ and that it has to be $((2))$ otherwise.
\end{proof}

Finally, assume that $\mu$ was already an eigenvalue of $S$ and Theorem \ref{thm:symp_change1} is applied for $\lambda_1 \in \sigma(S)$. Then the arguments of the proof of Lemma \ref{lem:eigenvectors_0} apply to $\mu$ (as an 'old'  eigenvalue of $S$) as well as the result from Theorem \ref{lem:eigval_lambda} $(ii)$ (for $\mu$ as the 'new' eigenvalue appearing in the spectrum of $\hat{S}$). Thus, the Segre characteristic of $\mu \in \sigma(\hat{S})$ is its old Segre characteristic from $S$ extended by one of the cases described in Theorem \ref{lem:eigval_lambda} $(ii)$.

As another consequence of Theorem \ref{lem:eigval_lambda} it follows that, if $\lambda_1$ is semisimple for $S$, then $\lambda_1$ remains semisimple for $\hat{S}$ is case its multiplicity was $\geq 2$.
Moreover, assume the symplectic matrix $S \in \MM_{2n}(\CC)$ is diagonalizable (i.e. all its eigenvalues are semisimple) and let $\hat{S}$  be constructed according to Theorem \ref{thm:symp_change1}. As a consequence of Lemma \ref{lem:eigenvectors_0} and Theorem \ref{lem:eigval_lambda}, the diagonalizability of $\hat{S}$ can then only be circumvented in case a $2 \times 2$ Jordan block arises for $\mu = \mu^{-1}$. As seen above, a $2 \times 2$ Jordan block for $\mu$ will arise if $R$ is different from $R_1$ and $R_2$. In other words, if $R_1$ or $R_2$ from \eqref{equ:min_R} are chosen in Theorem \ref{thm:symp_change1}, the matrix $\hat{S}$ will be semisimple in case $S$ was. In fact, this is the only situation in which $S$ and $\hat{S}$ are simultaneously diagonalizable since the simultaneous diagonalizability of $S$ and $\hat{S}$ is only possible if $S$ and $\hat{S}$ commute. According to Theorem \ref{thm:commute} this is the case if and only if $R=R_1,R_2$ are chosen. We summarize this result in the following corollary.

\begin{corollary}
Let $S \in \MM_{2n}(\CC)$ be symplectic with $\lambda_1 \in \sigma(S)$ and let $\mu \in \CC \setminus \lbrace 0, \lambda_1, \lambda_1^{-1} \rbrace$ be given. Suppose that $\hat{S} = S + XRX^TJ^TS \in \MM_{2n}(\CC)$ has been constructed according to Theorem \ref{thm:symp_change1} and that $S$ is semisimple. Then $\hat{S}$ is semisimple if and only if $R=R_1$ or $R=R_2$ for one of the matrices in \eqref{equ:min_R}. Moreover, in this case, $S$ and $\hat{S}$ are simultaneously diagonalizable.
\end{corollary}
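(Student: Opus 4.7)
The statement packages together the observations just above it, so the plan is largely to gather the pieces from \Cref{lem:eigenvectors_0}, \Cref{lem:eigval_lambda}, and \Cref{thm:commute} into a single argument.

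First, I would account for every eigenvalue of $\hat S$ separately, using the assumption that $S$ is semisimple to force all relevant Segre characteristics of $S$ to be strings of $1$'s. For any $\lambda\in\sigma(\hat S)$ distinct from $\lambda_1,\lambda_1^{-1},\mu,\mu^{-1}$, \Cref{lem:eigenvectors_0} shows that the Segre characteristic of $\lambda$ is unchanged, hence still trivial. For $\lambda_1$ and $\lambda_1^{-1}$, \Cref{lem:eigval_lambda}$(i)$ says the Segre characteristic is obtained from the one of $S$ by deleting one (or, in the $\lambda_1=\pm1$ case, two) trailing $1$'s; since $S$ is semisimple, what remains is again a string of $1$'s, so $\lambda_1$ and $\lambda_1^{-1}$ stay semisimple in $\hat S$. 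The only place where a nontrivial Jordan block can appear is therefore at $\mu$ (and $\mu^{-1}$), and it appears precisely in the situation described by \Cref{lem:eigval_lambda}$(ii)$.

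From \Cref{lem:eigval_lambda}$(ii)$ I would then read off the characterization: if $\mu\neq\mu^{-1}$ the Segre characteristics at $\mu,\mu^{-1}$ are trivial for every admissible $R$, while if $\mu=\mu^{-1}$ one gets the characteristic $((1,1))$ exactly when $R\in\{R_1,R_2\}$ and $((2))$ otherwise. If $\mu$ was already an eigenvalue of $S$, the remark just before the corollary combines the unchanged ``old'' part (all $1$'s) with the ``new'' part supplied by \Cref{lem:eigval_lambda}$(ii)$, producing the same dichotomy. Putting these observations together shows that $\hat S$ fails to be semisimple only in the case $\mu=\mu^{-1}$ with $R\notin\{R_1,R_2\}$, so choosing $R=R_1$ or $R=R_2$ always yields a semisimple $\hat S$, and conversely any other $R$ loses semisimplicity whenever a $2\times 2$ Jordan block at $\mu$ is possible — which yields the ``iff''.

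For the ``moreover'' statement, I would invoke \Cref{thm:commute}: when $R=R_1$ or $R=R_2$ we have $\hat S S=S\hat S$ (and this is the only choice producing commutativity when $\lambda_1\neq\pm 1$). Two commuting diagonalizable matrices share a common eigenbasis, so $S$ and $\hat S$ are simultaneously diagonalizable, completing the proof.

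The main obstacle is mostly bookkeeping rather than a genuine hurdle: one has to be careful to cover every eigenvalue of $\hat S$ (including the case where $\mu$ was already in $\sigma(S)$, and the degenerate cases $\lambda_1=\pm 1$ or $\mu=\pm 1$), and to distinguish clearly between ``$\hat S$ semisimple'' and ``$S,\hat S$ simultaneously diagonalizable'' — the latter genuinely needs commutativity, and hence \Cref{thm:commute}, rather than just semisimplicity of each factor.
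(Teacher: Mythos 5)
Your proposal is correct and follows essentially the same route as the paper's proof, which is the discussion paragraph immediately preceding the corollary: it combines Lemma~\ref{lem:eigenvectors_0}, Theorem~\ref{lem:eigval_lambda}, and Theorem~\ref{thm:commute} in exactly this way. You also correctly separate the two assertions and identify that simultaneous diagonalizability requires the commutativity result of Theorem~\ref{thm:commute} (plus the standard fact about commuting diagonalizable matrices), not merely semisimplicity of each factor.

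One remark worth making, which you partially flag but could state more bluntly: the ``only if'' direction of the first claim is vacuous whenever $\mu\neq\pm1$. In that case $\mu\neq\mu^{-1}$, so by Theorem~\ref{lem:eigval_lambda}$(ii)$ the Segre characteristic at $\mu$ is $((1))$ for \emph{every} admissible $R$, and $\hat S$ is semisimple regardless of whether $R\in\{R_1,R_2\}$. The honest reading is therefore: $R\in\{R_1,R_2\}$ is equivalent to semisimplicity of $\hat S$ only when $\mu=\pm1$; for other $\mu$ the condition $R\in\{R_1,R_2\}$ is sufficient but not necessary. This is an imprecision already present in the paper's statement (the preceding discussion in the paper is accurate, but the corollary as worded overstates it), and a similar caveat applies to ``this is the only situation in which $S$ and $\hat S$ are simultaneously diagonalizable'' when $\lambda_1=\pm1$, since Theorem~\ref{thm:commute} then gives commutativity for \emph{every} admissible $R$. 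Your bookkeeping approach is the right one; you would simply need to record these degenerate cases explicitly rather than folding them into a blanket ``iff''.
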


Example \ref{ex:sim_diag} below shows how the simultaneous diagonalization looks like if $R_1$ or $R_2$ are used to construct $\hat{S}$.

\begin{example} \label{ex:sim_diag}
Whenever $T^{-1}ST = \textnormal{diag}(\Lambda, \Lambda^{-1})$ with $T = [ \; x_1 \; Y \; x_2 \; Z \, ] \in \MM_{2n}(\CC)$ and $Y, Z \in \MM_{2n \times (n-1)}(\CC)$ and $\Lambda = \textnormal{diag}(\lambda_1, \ldots , \lambda_n)$, then for $S_j = S + XR_jX^TJ^TS$ with $R_1,R_2$ from \eqref{equ:min_R} we have
$$ T^{-1}\hat{S}_1 T = \begin{bmatrix} \Lambda_1 & \\ & \Lambda_1^{-1} \end{bmatrix}, \quad \textnormal{and} \quad T^{-1}\hat{S}_2 T = \begin{bmatrix} \Lambda_2 & \\ & \Lambda_2^{-1} \end{bmatrix}$$
where $\Lambda_1 = \textnormal{diag}(\mu, \lambda_2, \ldots, \lambda_n)$ and $\Lambda_2 = \textnormal{diag}(\mu^{-1}, \lambda_2, \ldots , \lambda_n)$. In fact, for $S$ being diagonal, $R_1$ and $R_2$ are the only possible choice such that $\hat{S}$ is diagonal, too.
\end{example}

\subsection{A note on condition numbers}

We conclude this section with a result on eigenvalue condition numbers. It is a surprising fact, that we can apply Theorem \ref{thm:symp_change1} to a symplectic matrix $S \in \MM_{2n}(\CC)$ without changing any eigenvalue condition number (for simple eigenvalues) at all. For Rado's theorem this is not true: an unstructured application of Theorem \ref{thm:rado} typically changes all eigenvalue condition numbers, even those of eigenvalues that remain unchanged. The main result on the behavior of condition numbers is stated in Theorem \ref{thm:condition}. For its proof, we need the following well-known fact that we state without proof in Lemma \ref{lem:leftright}.

\begin{lemma} \label{lem:leftright}
Let $A \in \MM_n(\CC)$ and suppose $x$ is a right eigenvector of $A$ for $\lambda$ (i.e. $Ax = \lambda x$) and $y$ is a left eigenvector of $A$ for $\mu$ (i.e. $y^HA = \mu y^H$). Then $y^Hx = 0$ if $\lambda \neq \mu$.
\end{lemma}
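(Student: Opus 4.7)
The plan is to evaluate the scalar $y^H A x$ in two different ways using associativity, exploiting the right eigenvector relation on one side and the left eigenvector relation on the other. Starting from $Ax = \lambda x$, multiplying by $y^H$ on the left yields
\begin{equation*}
y^H A x \;=\; y^H(\lambda x) \;=\; \lambda \, y^H x.
\end{equation*}
On the other hand, using $y^H A = \mu y^H$ and multiplying by $x$ on the right gives $y^H A x = \mu \, y^H x$. Equating the two expressions produces $(\lambda - \mu)\, y^H x = 0$, and since $\lambda \neq \mu$ by hypothesis, we conclude $y^H x = 0$.

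There is essentially no obstacle here; this is a classical biorthogonality identity and the argument is a one-line associativity manipulation. The only point worth flagging is that the statement uses the convention $y^H A = \mu y^H$ (so $y$ is a left eigenvector for the scalar $\mu$ itself, not for $\overline{\mu}$), which is precisely what makes the scalar $\mu$ factor cleanly out of $y^H A x$ without any complex conjugation entering the computation. With that convention fixed, the proof proceeds exactly as above and no further care is required.
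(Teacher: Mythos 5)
Your proof is correct and is the standard one-line biorthogonality argument; note that the paper actually states Lemma \ref{lem:leftright} \emph{without} proof, so there is nothing to compare against, but your computation of $y^HAx$ in two ways and the remark on the convention $y^HA=\mu y^H$ (rather than $\overline{\mu}y^H$) are exactly right.
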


\begin{theorem} \label{thm:condition}
Let $S \in \MM_{2n}(\CC)$ be symplectic with eigenvalues $\lambda_1, \lambda_1^{-1}, \lambda_2, \lambda_2^{-1},$ $\ldots , \lambda_n, \lambda_n^{-1}$ and let $\mu \in \CC \setminus \lbrace 0 \rbrace$ with $\mu \notin \sigma(S)$ be given. Assume that $\lambda_1$ is simple and let 
$ \hat{S} = S + XRX^TJ^TS$
be constructed according to Theorem \ref{thm:symp_change1} so that $\mu, \mu^{-1}, \lambda_2, \lambda_2^{-1}, \ldots , \lambda_n, \lambda_n^{-1}$ are the eigenvalues of $\hat{S}$. Then the following holds:
\begin{enumerate}
\item[$(i)$] If $\nu$ is a simple eigenvalue of $\hat{S}$, $\mu \neq \nu \neq \mu^{-1}$, then $\kappa(\hat{S}, \nu) = \kappa(S, \nu).$
\item[$(ii)$] For $R = R_1$, where $R_1$ is the matrix given in \eqref{equ:min_R}, it holds that
$$\kappa(\hat{S}, \mu) = \kappa(S, \lambda_1) \quad \textnormal{and} \quad \kappa(\hat{S}, \mu^{- 1}) = \kappa(S, \lambda_1^{- 1}).$$
\end{enumerate}

\end{theorem}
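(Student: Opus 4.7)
I would prove the two parts by tracking how the right and left eigenvectors of $S$ transform under the update $S \mapsto \hat S = S + XRX^TJ^TS$. The guiding principle is that eigenvectors orthogonal to $X$ in a suitable sense are unaffected by the rank-$\leq 2$ correction (this handles (i)), while the specific choice $R = R_1$ is precisely what keeps $x_1$ and $x_2$ as right eigenvectors of $\hat S$, now with eigenvalues $\mu$ and $\mu^{-1}$ (this handles (ii)).

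For (i), first observe that simplicity of $\lambda_1$, the assumption $\mu \notin \sigma(S)$, and $\nu \neq \mu, \mu^{-1}$ force $\nu \neq \lambda_1, \lambda_1^{-1}$, since $\lambda_1, \lambda_1^{-1}$ appear only in the $\lambda_1$-pair of $\sigma(S)$. Lemma \ref{lem:eigenvectors_0} then gives that every right eigenvector of $S$ for $\nu$ is also a right eigenvector of $\hat S$ for $\nu$. For the left side, if $w^HS = \nu w^H$, Lemma \ref{lem:leftright} applied to $S$ with the right eigenvectors $x_1, x_2$ for $\lambda_1, \lambda_1^{-1}$ gives $w^HX = 0$, hence
$$w^H \hat S = w^H S + (w^H X)\, R\, X^T J^T S = \nu w^H,$$
so $w$ is also a left eigenvector of $\hat S$ for $\nu$. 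The same right and left eigenvectors then enter the condition number formulas for $S$ and $\hat S$, and (i) follows.

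For (ii), I would substitute $R = R_1$ into \eqref{equ:omega1}: the choices $r_{11} = r_{22} = 0$ together with $1 + r_{12} = \lambda_1^{-1}\mu$ and $1 - r_{21} = \mu^{-1}\lambda_1$ collapse $\Omega$ to $\diag(\mu, \mu^{-1})$. Since $\hat S X = X \Omega$ (by the same computation that gives $SX = X\Lambda$), we obtain $\hat S x_1 = \mu x_1$ and $\hat S x_2 = \mu^{-1} x_2$, so the right eigenvectors of $\hat S$ at $\mu, \mu^{-1}$ coincide with those of $S$ at $\lambda_1, \lambda_1^{-1}$. The symplectic identity \eqref{equ:rightleft} --- which is valid for any symplectic matrix --- turns a right eigenvector $z$ for $\rho^{-1}$ into the left eigenvector $z^TJ^T$ for $\rho$. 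Applied to $S$, it makes $x_2^TJ^T$ and $x_1^TJ^T$ left eigenvectors of $S$ for $\lambda_1$ and $\lambda_1^{-1}$; applied to $\hat S$, the same two row vectors are left eigenvectors of $\hat S$ for $\mu$ and $\mu^{-1}$. Identical right and left eigenvectors yield identical condition numbers, establishing (ii).

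The only step that requires genuine computation is the diagonalization $\Omega = \diag(\mu, \mu^{-1})$ for $R = R_1$; this algebraic identity is what singles out $R_1$ (and, by symmetry, $R_2$) from the admissible family of \eqref{equ:symp_change1a}--\eqref{equ:symp_change1b} and explains why the eigenvector condition numbers are exactly preserved by this particular update. Once this is verified, the remainder of the argument is transport along the symplectic pairing \eqref{equ:rightleft} together with the orthogonality relations supplied by Lemma \ref{lem:leftright}.
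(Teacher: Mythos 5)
Your proof is correct and follows essentially the same approach as the paper: part (i) uses Lemma~\ref{lem:eigenvectors_0} for the right eigenvector and Lemma~\ref{lem:leftright} (giving $w^HX=0$) for the left eigenvector, exactly as in the paper; part (ii) establishes that $x_1,x_2$ remain right eigenvectors of $\hat S$ (now for $\mu,\mu^{-1}$), which the paper does by directly computing $\hat S x_1$ while you do it via $\hat S X = X\Omega$ with $\Omega=\diag(\mu,\mu^{-1})$ --- the same computation packaged slightly differently. Your use of \eqref{equ:rightleft} applied to the symplectic matrix $\hat S$ to obtain the left eigenvector is a marginally cleaner way to state what the paper checks by hand, but the underlying argument is identical.
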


\begin{proof}
$(i)$ Under the assumption $\mu^{-1} \neq \nu \neq \mu$ and the simplicity of $\lambda_1$, it follows that $\nu = \lambda_j$ or $\nu = \lambda_j^{-1}$ for some $j=2, \ldots , n$. Assume w.\,l.\,o.\,g. that $\nu = \lambda_2$ and let $y_1 \in \CC^{2n}$ be some corresponding eigenvector of $S$. According to Lemma \ref{lem:eigenvectors_0}, $\hat{S}y_1 = \lambda_2y_1$ holds. Next suppose $z_1 \in \CC^{2n}$ is some left eigenvector of $S$ for $\lambda_2$, i.e. $z_1^HS = \lambda_2 z_1^H$. Then
$$ z_1^H \hat{S} = z_1^H \big( S + XRX^TJ^TS \big) = z_1^HS = \lambda_2 z_1^H$$
as $z_1^HX = 0$ according to Lemma \ref{lem:leftright}. Thus the left and right eigenvectors for $\lambda_2$ of $S$ and $\hat{S}$ coincide, which directly implies $ \kappa(\hat{S}, \lambda_2) = \kappa(S, \lambda_2)$.

$(ii)$ Now assume $R=R_1$ for $R_1$ in \eqref{equ:min_R} and suppose $\nu = \mu$. The condition $\mu \notin \sigma(S)$ implies $\mu$ to be simple for $\hat{S}$. If $x_1,x_2 \in \CC^{2n}$ are eigenvectors of $S$ for $\lambda_1$ and $\lambda_1^{-1}$, respectively, then one directly obtains
$$ \begin{aligned} \hat{S}x_1 &= Sx_1 + XRX^TJ^TSx_1 = \lambda_1x_1 - \lambda_1 XRX^TJx_1  = \lambda_1x_1 - \lambda_1 XR \begin{bmatrix} 0 \\ -1 \end{bmatrix} \\
&= \lambda_1(1+r_{12})x_1 = \lambda_1 \cdot (1 + \lambda_1^{-1}(\mu - \lambda_1))x_1 = (\lambda_1 + \mu - \lambda_1)x_1 = \mu x_1
\end{aligned} $$
(similarly, $\hat{S}x_2 = \mu^{-1}x_2$ follows).
Analogously we have $x_2^TJ^T \hat{S} = \mu x_2^TJ^T$, which follows from $x_2^TJ^TS = \lambda_1 x_2^TJ^T$ (see \eqref{equ:rightleft}). Thus, the left and right eigenvectors of $S$ for $\lambda_1$ and those of $\hat{S}$ for $\mu$ coincide and the statement follows. The proof is analogous for $\nu = \mu^{-1}$.
\end{proof}

\begin{remark}
One can proceed as in the above proof to see that, if $R=R_2$ is used,
$$\kappa(\hat{S}, \mu) = \kappa(S, \lambda_1^{-1}) \quad \textnormal{and} \quad \kappa(\hat{S}, \mu^{- 1}) = \kappa(S, \lambda_1).$$
In fact, it is easy to show that now $\hat{S}x_1 = \mu^{-1}x_1$ and $\hat{S}x_2 = \mu x_2$ hold.
\end{remark}

\section{Experiments}
\label{sec:experiments}

To perform numerical experiments in \textsc{Matlab} R2021b, we used the code available in \cite{jagger}. To obtain symplectic matrices $S \in \MM_{200}(\CC)$, a symplectic matrix $S' \in \MM_{200}(\CC)$ constructed from \cite{jagger}, was modified as
\begin{equation} S = \begin{bmatrix} D_1^{-1} & \\ & D_1 \end{bmatrix} S' \begin{bmatrix} D_2 & \\ & D_2^{-1} \end{bmatrix}, \quad \textnormal{where} \quad \begin{aligned} D_1 &= \textnormal{diag}(\alpha_1, \ldots , \alpha_{100} ), \\ D_2 &= \textnormal{diag}(\beta_1, \ldots , \beta_{100}) \end{aligned} \label{equ:rand_construct} \end{equation}
and $\alpha_i, \beta_j$ are random numbers: \texttt{rand(1) + 1i*rand(1)}. Compared to $S'$, $S$ has a more widespread spectrum ranging in magnitude from about $10^{-3}$ to $10^2$. 

  \begin{figure}
    \begin{center}
\includegraphics[width=150pt]{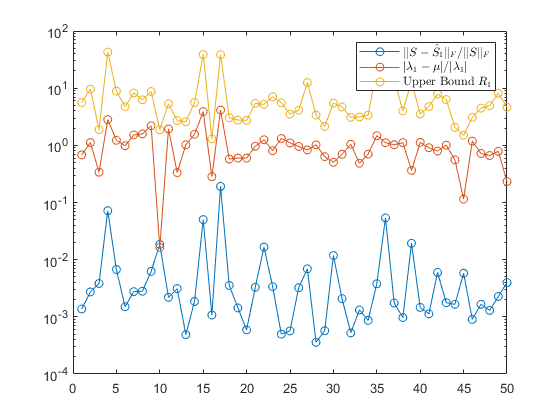} \includegraphics[width=150pt]{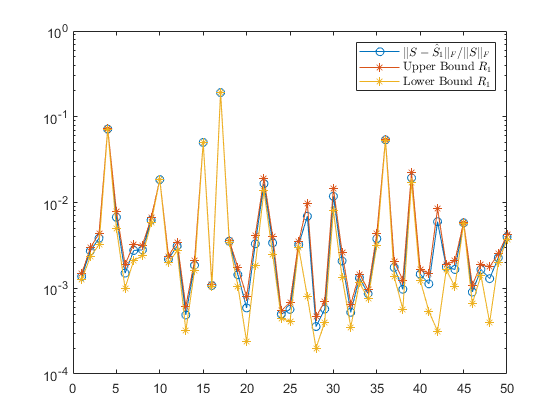}
 \end{center}
        \caption{The plots show the effect of an eigenvalues modification when $\lambda_1 \in \sigma(S)$  undergoes a relative change of $|\lambda_1|$. Fifty experiments have been performed with $S \in \MM_{200}(\CC)$. The plots show the use of $\hat{S}_1 = (I_{2n} + XR_1X^TJ^T)S$ for $R_1$ in \eqref{equ:min_R}, the coarse bound in \eqref{equ:frob_bound_S1a} (left plot), the upper and lower bounds from Theorem \ref{thm:coarse_bound} (right plot) and the relative change in the eigenvalue $| \lambda_1 - \mu|/|\lambda_1|$.}
    \label{fig:upper}
    \end{figure}

In Figure \ref{fig:upper}, 50 examples are shown where a randomly selected eigenvalue $\lambda_1$ of $S$ has been changed to $\mu = \lambda_1(1 + \gamma z)$, where $z \in \CC$ is a random complex number with $| z| = 1$ and $\gamma = | \lambda_1|$. Therefore, $|\mu - \lambda_1|/|\lambda_1| = | \lambda_1|$. The plot shows the relative change $\Vert S - \hat{S}_1 \Vert_F/\Vert S \Vert_F$, the coarse bound in \eqref{equ:frob_bound_S1a}, the upper and lower bounds from Theorem \ref{thm:coarse_bound} and the relative change in the eigenvalue $| \lambda_1 - \mu|/|\lambda_1|$. The plots show clearly that the bounds from Theorem \ref{thm:coarse_bound} are significantly sharper than the bound from \eqref{equ:frob_bound_S1a}. If $R_2$ is used instead of $R_1$ the plots do not alter essentially.

The most significant difference between using $\hat{S}_1$ and $\hat{S}_2$ can be seen when an eigenvalue is subject to a small or large relative change compared to its absolute value. This is shown in Figure \ref{fig:R1R2} where the experimental set-up is the same as before but now $\gamma = 10^{-3}|\lambda_1|$ (left plot) and $\gamma = 10^3|\lambda_1|$ (right plot) are chosen. Both plots show the relative change $\Vert S - \hat{S}_j \Vert_F/\Vert S \Vert_F$ for $j=1,2$ and the relative change in the eigenvalue $| \lambda_1 - \mu|/|\lambda_1|$. It is seen that for a small change in the eigenvalue $\lambda_1$, the relative change $\Vert S - \hat{S}_1 \Vert_F/\Vert S \Vert_F$ is significantly smaller than $\Vert S - \hat{S}_2 \Vert_F/\Vert S \Vert_F$. However, when $\lambda_1$ undergoes a large change, then there is no big difference in using $R_1$ or $R_2$.

In Figure \ref{fig:surface} we show the use of matrices $R$ different from $R_1$ and $R_2$ in \eqref{equ:min_R}. To this end, we fix a symplectic matrix $S \in \MM_{200}(\CC)$ and an eigenvalue $\lambda_1$ of $S$ that is to be modified by a relativ change of $| \lambda_1|$.  We consider a mesh grid on $[-1,1] \times [-1,1]$ with 150 discretization points in each direction. Each point $(x_j,y_k)$ is associated with the complex number $c_{jk} := x_j + \imath y_k$ from which we made up $r_{11} = r_{22} = \sqrt{c_{jk}}$. The values for $r_{12}$ and $r_{21}$ are found according to \eqref{equ:r12r21}  and \eqref{equ:eta_pol} so that we obtain two different matrices $\tilde{R}_1 = R(\eta_1,r_{11},r_{22})$ and $\tilde{R}_2 = R(\eta_2,r_{11},r_{22})$. The matrices $\tilde{S}_j = (I_{2n} + X\tilde{R}_jX^TJ^T)S$, $j=1,2$, where constructed and in Figure \ref{fig:surface} the minimum of $\Vert S - \tilde{S}_j \Vert_F/\Vert S \Vert_F$ is shown for each point $c_{jk} \in [-1,1] \times [-1,1]$. The plot indicates that the minimum numerically found among all values $\Vert S - \tilde{S}_j \Vert_F/\Vert S \Vert_F$ is attained for $c_{jk} = 0$, i.e. $r_{11} = r_{22} = 0$. Thus, the minimum is obtained for one of the matrices $R$ in \eqref{equ:min_R}. In most examples that have been considered a plot similar to the one in Figure \ref{fig:surface} arose. However, seldom the numerical minimum was detected somewhere near $c_{jk} = 0$.

  \begin{figure}
    \begin{center}
\includegraphics[width=150pt]{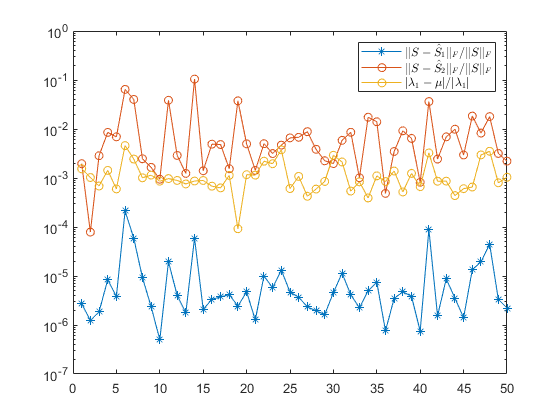} \includegraphics[width=150pt]{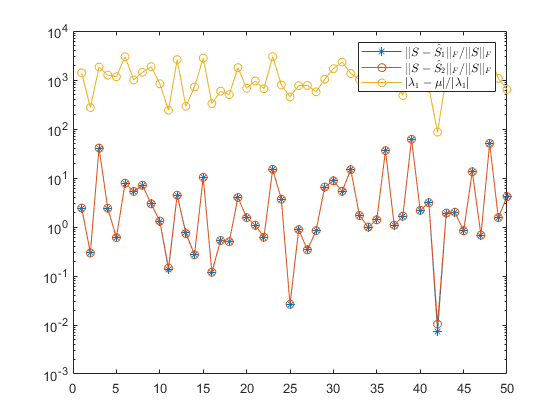}
 \end{center}
        \caption{The plots show the effect of an eigenvalues modification when $\lambda_1 \in \sigma(S)$  undergoes a relative change of $0.001 \cdot |\lambda_1|$ (left plot) and $1000 \cdot | \lambda_1|$ (right plot). Fifty experiments have been performed with $S \in \MM_{200}(\CC)$. The plots show the use of $\hat{S}_1$ and $\hat{S}_2$ and the relative change in the eigenvalue $| \lambda_1 - \mu|/|\lambda_1|$.}
    \label{fig:R1R2}
    \end{figure}

  \begin{figure}
    \begin{center}
\includegraphics[width=170pt]{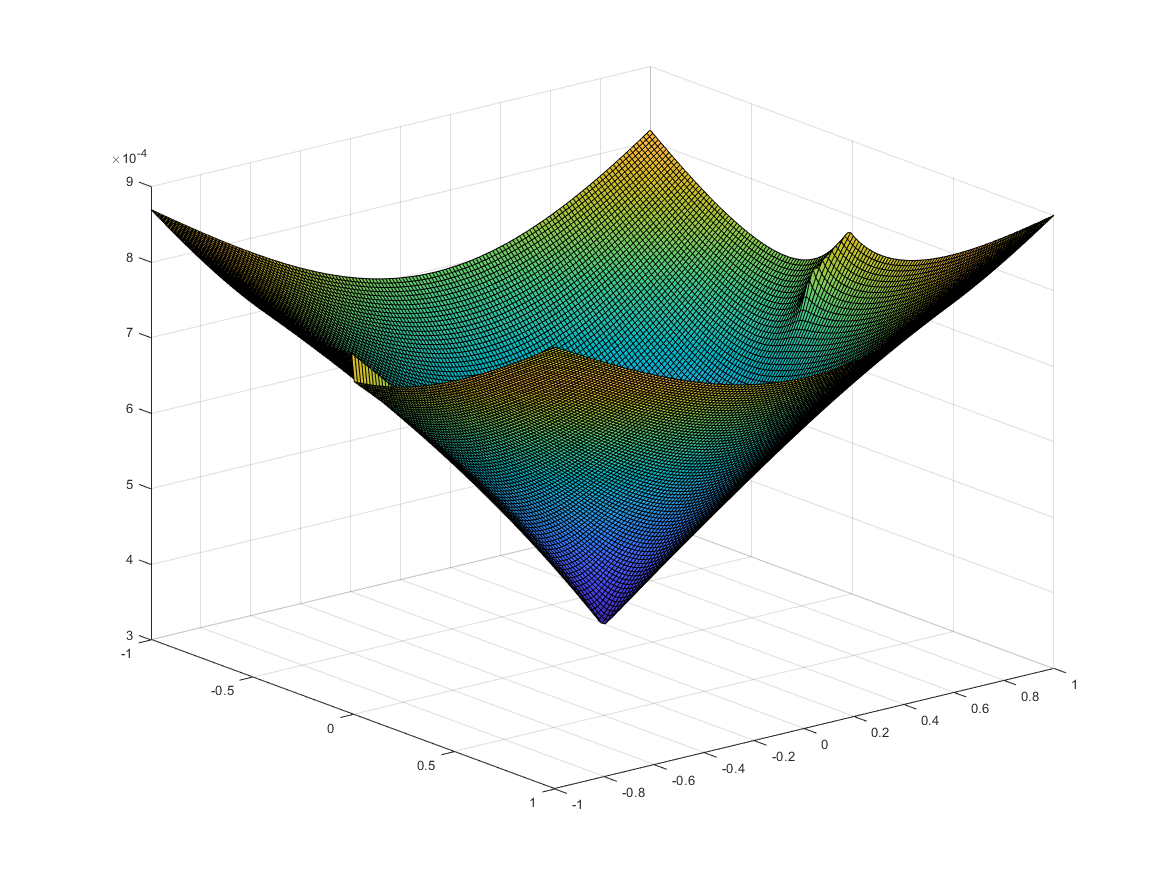}
 \end{center}
        \caption{The plot shows the minimum of $\Vert S - \tilde{S}_j \Vert_F/\Vert S \Vert_F$ for $j=1,2$ on the square $[-1,1] \times [-1,1]$ with 150 discretization points in each direction. The minimum on the grid is attained for $c_{jk}=0$, i.e. $r_{11} = r_{22}=0$. For random experiments the plots always look similar to the plot shown above. Notice that the surface has a sharp edge which arises due to the complex square root that has to be computed.}
    \label{fig:surface}
    \end{figure}

\section{Symplectic Matrix Pencils}
\label{sec:pencils}

In this section we analyse the eigenvalue modification problem of Section \ref{sec:intro} for symplectic matrix pencils.
We call a matrix pencil $P(\lambda) = A - \lambda B$, $A,B \in \MM_{2n}(\CC)$ symplectic (see \cite[Sec.\,2.1.1]{fass}) if it holds that
\begin{equation} AJA^T = BJB^T. \label{equ:symp_pencil} \end{equation}
From \eqref{equ:symp_pencil} it follows that for a symplectic pencil $P(\lambda) = A - \lambda B$ either $A$ and $B$ are both regular or singular, cf. \cite[Sec.\,2]{fass}. A scalar $\nu \in \CC$ is called an eigenvalue of $P(\lambda)$ if there exists some nonzero $x \in \CC^{2n}$ with $P(\nu)x=0$, i.e. $Ax = \nu B x$ \cite[Sec.\,2]{betcke}. Eigenvalues of symplectic pencils also arise in pairs $(\nu, \nu^{-1})$ \cite{fass}. In particular, if $A$ and $B$ are singular, it is also possible to have $\nu=0$ as an eigenvalue of $P(\lambda)$ which implies that $\lambda^{-1} = \infty$ is also an eigenvalue of $P(\lambda)$ (see, e.g., \cite{deteran} for more information on the finite and infinite eigenstructure of matrix pencils). In the following, we focus on symplectic pencils $P(\lambda) =A  - \lambda B$ where both $A$ and $B$ are regular. Thus, $P(\lambda)$ has neither the eigenvalue zero nor the eigenvalue infinity.

In general, for an arbitrary matrix pencil $P(\lambda) = A - \lambda B$, $A, B \in \MM_n(\CC)$, where $B$ is nonsingular, we can easily derive an adapted version of Rado's theorem.

\begin{theorem}
Let $P(\lambda) = A - \lambda B$, $A,B \in \MM_{n}(\CC)$ and $B$ nonsingular, be a matrix pencil with eigenvalues $\lambda_1,  \ldots , \lambda_n \in \CC $. Let $x_1, \ldots , x_k \in \CC^n$ be eigenvectors for $\lambda_1, \ldots , \lambda_k$ such that $\textnormal{rank}(X)=k$ for $X = [\, x_1 \; x_2 \; \cdots \; x_k \, ] \in \MM_{n \times k}(\CC)$. Furthermore, let $C \in \MM_{n \times k}(\CC)$ be arbitrary. Then the matrix pencil
$$ \tilde{P}(\lambda) = (A + BXC^T) - \lambda B$$ has the eigenvalues $\mu_1, \ldots , \mu_k, \lambda_{k+1}, \ldots , \lambda_n$, where $\mu_1, \ldots , \mu_k$ are the eigenvalues of the matrix $\Lambda + C^TX$ with $\Lambda = \textnormal{diag}(\lambda_1, \ldots , \lambda_k)$.
\end{theorem}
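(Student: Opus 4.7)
The key observation is that since $B$ is nonsingular, the spectral problem for $P(\lambda) = A - \lambda B$ reduces to a standard spectral problem for a matrix: $\nu$ is an eigenvalue of $P(\lambda)$ with eigenvector $x$ if and only if $Ax = \nu Bx$, i.e. $(B^{-1}A)x = \nu x$. Thus $P(\lambda)$ and $B^{-1}A \in \MM_n(\CC)$ share eigenvalues (with multiplicities) and eigenvectors. The plan is therefore to reduce the pencil statement to Rado's matrix theorem (Theorem \ref{thm:rado}) applied to $B^{-1}A$.

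First I would observe that $\tilde{P}(\lambda) = (A + BXC^T) - \lambda B$ satisfies $\tilde{P}(\lambda)y = 0$ iff $(B^{-1}A + XC^T)y = \lambda y$, so the eigenvalues of $\tilde{P}(\lambda)$ coincide with those of the matrix $\hat{M} := B^{-1}A + XC^T$. Next I would note that, by the equivalence above, $x_1, \ldots, x_k$ are linearly independent eigenvectors of $B^{-1}A$ corresponding to $\lambda_1, \ldots, \lambda_k$ (by hypothesis $\textnormal{rank}(X) = k$), and $\lambda_1, \ldots, \lambda_n$ are precisely the eigenvalues of $B^{-1}A$.

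Applying Theorem \ref{thm:rado} to $M := B^{-1}A$ with the eigenvectors $x_1, \ldots, x_k$ and the matrix $C$, we obtain that $\hat{M} = M + XC^T = B^{-1}A + XC^T$ has eigenvalues $\mu_1, \ldots, \mu_k, \lambda_{k+1}, \ldots, \lambda_n$, where $\mu_1, \ldots, \mu_k$ are the eigenvalues of $\Lambda + C^TX$ with $\Lambda = \textnormal{diag}(\lambda_1, \ldots, \lambda_k)$. Combining with the first step, these are precisely the eigenvalues of $\tilde{P}(\lambda)$, which completes the proof.

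There is no serious obstacle: the entire argument is a reduction to the classical Rado theorem using that $B$ is invertible, and the only thing to check carefully is that the left-multiplication by $B^{-1}$ converts the additive rank-$k$ perturbation $BXC^T$ of the pencil into the additive perturbation $XC^T$ of the matrix $B^{-1}A$, so that Theorem \ref{thm:rado} is directly applicable without any modification.
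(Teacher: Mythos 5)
Your proof is correct and follows essentially the same route as the paper: reduce to the matrix $M = B^{-1}A$, observe that $x_1,\ldots,x_k$ are eigenvectors of $M$, apply Theorem \ref{thm:rado} to $M + XC^T$, and note that multiplying the pencil $\tilde P(\lambda)$ on the left by $B^{-1}$ gives exactly $M + XC^T - \lambda I_n$. The only difference is cosmetic phrasing; the reduction and the appeal to Rado's theorem are identical.
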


\begin{proof}
As $B^{-1}P(\lambda) = B^{-1}A - \lambda I_n$, the eigenvalues of $P(\lambda)$ coincide with those of the matrix $M := B^{-1}A \in \MM_{2n}(\CC)$. In particular, $P(\lambda)$ does not have $\lambda = \infty$ as an eigenvalue.
Assume that $AX = BX \Lambda$ with $\Lambda = \textnormal{diag}(\lambda_1, \ldots , \lambda_k)$ we obtain $B^{-1}AX = X \Lambda$ and Theorem \ref{thm:rado} implies that
$\hat{M} := B^{-1}A + XC^T$ has the eigenvalues $\mu_1, \ldots , \mu_k, \lambda_{k+1}, \ldots , \lambda_n$, where $\mu_1, \ldots , \mu_k$ are the eigenvalues of the matrix $\Lambda + C^TX$. As $\hat{M}$ and the matrix pencil $\hat{P}(\lambda) = (A + BXC^T) - \lambda B$ have the same eigenvalues, the statement follows.
\end{proof}

Now let $P(\lambda) = A - \lambda B$ be a symplectic matrix pencil according to \eqref{equ:symp_pencil} with nonsingular $A,B$ and eigenvalues $\lambda_1, \lambda_1^{-1}, \ldots , \lambda_n, \lambda_n^{-1}$. We set $$\hat{P}(\lambda) := (A + BXC^T) - \lambda B$$ and intend to determine $C \in \MM_{2n \times 2}(\CC)$ such that $\hat{P}(\lambda)$ is again symplectic and has the eigenvalues $\mu, \mu^{-1}, \lambda_2, \lambda_2^{-1}, \ldots , \lambda_n, \lambda_n^{-1}$ for a given value $\mu \in \CC \setminus \lbrace 0 \rbrace$.
A direct calculation reveals that \eqref{equ:symp_pencil} is equivalent to $B^{-1}A$ being a symplectic matrix, i.e. $J^T(B^{-1}A)^TJ = (B^{-1}A)^{-1}$. Thus, Theorem \ref{thm:symp_change1} can be applied to the matrix $S := B^{-1}A$ that has the same eigenvalues as $P(\lambda)$. Now suppose
$$ AX = BX \begin{bmatrix} \lambda_1 & \\ & \lambda_1^{-1} \end{bmatrix}, \qquad X = \begin{bmatrix} x_1 & x_2 \end{bmatrix} \in \MM_{2n \times 2}(\CC),$$
i.e. $x_1,x_2 \in \CC^{2n}$ are generalized eigenvectors for $\lambda_1$ and $\lambda_1^{-1}$, respectively. Furthermore, assume $x_1^TJx_2 \neq 0$. Then
$$ \hat{S} := B^{-1}A + XRX^TJ^TB^{-1}A$$
is symplectic with eigenvalues $\mu, \mu^{-1}, \lambda_2, \lambda_2^{-1}, \ldots , \lambda_n, \lambda_n^{-1}$ provided that $R$ is chosen according to the conditions in Theorem \ref{thm:symp_change1} for $\mu$. Then, the matrix pencil $\hat{P}(\lambda) := B(\hat{S} - \lambda I_{2n})$, i.e.
\begin{equation} \hat{P}(\lambda) = \big( A + BXRX^TJ^TB^{-1}A \big) - \lambda B, \label{equ:hat_P} \end{equation}
has the same eigenvalues as $\hat{S}$ \cite[Sec.\,3.1]{deteran}. In fact, $\hat{P}(\lambda)$ is again a symplectic pencil. To show this, we have to check that
$$ \big( A+BXRX^TJ^TB^{-1}A \big) J \big( A+BXRX^TJ^TB^{-1}A \big)^T = BJB^T$$
holds. To this end, it only remains to prove that
\begin{equation} \begin{aligned} &AJ \big( BXRX^TJ^TB^{-1}A \big)^T + \big( BXRX^TJ^TB^{-1}A \big)JA^T \\ & \qquad + \big( BXRX^TJ^TB^{-1}A \big) J \big( BXRX^TJ^TB^{-1}A \big)^T = 0
\end{aligned} \label{equ:symp_pencil_change1} \end{equation}
since $AJA^T = BJB^T$ holds by assumption. Using this relation, \eqref{equ:symp_pencil_change1} simplifies to
$$ -BXR^TX^TB^T + BXRX^TB^T + BXRJ_2R^TX^TB^T = 0$$
which can be rewritten as
$BX \big( R - R^T + RJ_2R^T \big) X^TB^T = 0$. As in Section \ref{sec:symplectic_matrix} this relation holds if and only if $R - R^T + RJ_2R^T =0$.
As for any $R \in \MM_2(\CC)$ we have $RJ_2R^T = R^TJ_2R$, it follows that the condition $R-R^T+RJ_2R^T=0$ is equivalent to \eqref{equ:symp_matrixequation2}. Therefore, since $R$ was constructed according to \eqref{equ:symp_change1b} so that $R - R^T + R^TJ_2R = 0$ holds, this finally shows that \eqref{equ:symp_pencil_change1} is true, so $\hat{P}(\lambda)$ is symplectic. As already mentioned above, $\hat{P}(\lambda)$ and $\hat{S}$ have the same eigenvalues, so the eigenvalues of $\hat{P}(\lambda)$ are $\mu, \mu^{-1}, \lambda_2, \lambda_2^{-1}, \ldots , \lambda_n, \lambda_n^{-1}$.

Notice that  $\hat{P}(\lambda)$ in \eqref{equ:hat_P} can be rewritten in various ways, e.g.
\begin{align} \hat{P}(\lambda) &= \big( A + BXRX^TB^TA^{-T}J^T \big)- \lambda B \label{equ:symp_form1} \\ &= \big( A + BXRX^TB^TJ^TA \big) - \lambda B \label{equ:symp_form2} \end{align}
using the relation $J^TB^{-1}A = B^TA^{-T}J^T$ that follows from $B^{-1}A$ being symplectic in \eqref{equ:symp_form1} and $A^{-T} = J^TAJ$ in \eqref{equ:symp_form2}. Using  $X^TB^T = \Lambda^{-1}X^TA^T$  and exchanging $X^TB^T$ with $\Lambda^{-1}X^TA^T$ in \eqref{equ:symp_form1} yields
\begin{align} \hat{P}(\lambda) &= (A + BXR \Lambda^{-1}X^TA^TA^{-T}J^T)- \lambda B \notag \\ &= \left( A + BXR \begin{bmatrix} \lambda_1^{-1} & \\ & \lambda_1 \end{bmatrix} X^TJ^T \right) - \lambda B. \label{equ:pencil_rewritten} \end{align}
which is an expression for $\hat{P}(\lambda)$ similar to \eqref{equ:hat_S}. We conclude our finding in the following theorem.

\begin{theorem} \label{thm:symp_pencil_change}
Let $P(\lambda) = A - \lambda B$, $A,B \in \MM_{2n}(\CC)$ nonsingular, be a symplectic pencil with eigenvalues $\lambda_1, \lambda_1^{-1}, \lambda_2, \lambda_2^{-1},$ $\ldots , \lambda_n, \lambda_n^{-1} \in \CC$ and let $\mu \in \CC \setminus \lbrace 0 \rbrace$ be given. Let $x_1, x_2 \in \CC^{2n}$ be eigenvectors for $\lambda_1$ and $\lambda_1^{-1}$, respectively, normalized such that $X^TJ_{2n}X = J_2$ for $X = [ \; x_1 \; x_2 \; ] \in \MM_{2n}(\CC)$ and set $d := (\mu + \mu^{-1} ) - ( \lambda_1 + \lambda_1^{-1})$. Then the matrix pencil
\begin{equation} \hat{P}(\lambda) := \left( A + BXR \begin{bmatrix} \lambda_1^{-1} & \\ & \lambda_1 \end{bmatrix}X^TJ \right) - \lambda B \label{equ:symp_pencil_change} \end{equation}
is again symplectic and has the eigenvalues $\mu, \mu^{-1}, \lambda_2, \lambda_2^{-1}, \ldots , \lambda_n, \lambda_n^{-1}$ provided that $R=[r_{ij}]_{ij} \in \MM_2(\CC)$ is chosen such that \eqref{equ:symp_change1a} and \eqref{equ:symp_change1b} hold.
\end{theorem}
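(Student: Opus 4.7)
The plan is to reduce the pencil problem to the matrix case already handled by Theorem \ref{thm:symp_change1}. First, I observe that the symplecticity condition $AJA^T = BJB^T$ on the pencil is equivalent to $J^T(B^{-1}A)^TJ = (B^{-1}A)^{-1}$, i.e.\ to $S := B^{-1}A$ being a symplectic matrix. Moreover, $P(\lambda)$ and $S$ share the same eigenvalues (and eigenvectors: $AX = BX\Lambda$ iff $SX = X\Lambda$ with $\Lambda = \textnormal{diag}(\lambda_1, \lambda_1^{-1})$). So the eigenvectors $x_1, x_2$ of $P(\lambda)$ normalized so that $X^TJX = J_2$ are exactly the eigenvectors of $S$ required by Theorem \ref{thm:symp_change1}.

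Next, for any $R$ satisfying \eqref{equ:symp_change1a} and \eqref{equ:symp_change1b}, Theorem \ref{thm:symp_change1} produces a symplectic matrix
$$ \hat{S} := S + XRX^TJ^TS $$
with the desired spectrum $\mu, \mu^{-1}, \lambda_2, \lambda_2^{-1}, \ldots, \lambda_n, \lambda_n^{-1}$. I then set $\hat{P}(\lambda) := B(\hat{S} - \lambda I_{2n})$, which immediately gives a pencil with the same eigenvalues as $\hat{S}$ and has the form
$$ \hat{P}(\lambda) = \bigl(A + BXRX^TJ^TB^{-1}A\bigr) - \lambda B.$$
To bring this into the form stated in the theorem, I would use $X^TJ^TB^{-1}A = X^TJ^TS = \Lambda^{-1}X^TJ^T$ (exactly as in \eqref{equ:hat_S}), which yields the compact representation
$$ \hat{P}(\lambda) = \Bigl(A + BXR\,\textnormal{diag}(\lambda_1^{-1},\lambda_1)X^TJ\Bigr) - \lambda B$$
up to the sign convention on $J$ versus $J^T$ which is handled by $J^T = -J$.

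The remaining and technically most involved step is to verify that $\hat{P}(\lambda)$ is again symplectic, i.e.\ that $\hat{A}J\hat{A}^T = BJB^T$ for $\hat{A} := A + BXRX^TJ^TB^{-1}A$. Expanding the left-hand side and using $AJA^T = BJB^T$ to cancel the leading term, the equation reduces to
\begin{equation*}
AJ(BXRX^TJ^TB^{-1}A)^T + (BXRX^TJ^TB^{-1}A)JA^T + (BXRX^TJ^TB^{-1}A)J(BXRX^TJ^TB^{-1}A)^T = 0.
\end{equation*}
Using $AJA^T = BJB^T$ repeatedly to move $B^{-1}A$ and $A^TB^{-T}$ through the central $J$, together with $X^TJX = J_2$ on the cubic term, this collapses to
$$ BX\bigl(R - R^T + RJ_2R^T\bigr)X^TB^T = 0. $$
Since for any $2 \times 2$ matrix one has $RJ_2R^T = R^TJ_2R$ (a direct calculation), the bracketed expression vanishes by virtue of \eqref{equ:symp_matrixequation2} = \eqref{equ:symp_change1b}. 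I expect this algebraic reduction to be the main obstacle, since the cubic term must be manipulated carefully; the rest of the proof is a direct transfer from the matrix setting. The eigenvalue claim then follows since $\hat{P}(\lambda)$ and $\hat{S}$ share spectra.
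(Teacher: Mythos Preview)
Your proposal is correct and follows essentially the same route as the paper: reduce to the symplectic matrix $S = B^{-1}A$, apply Theorem~\ref{thm:symp_change1}, define $\hat{P}(\lambda) = B(\hat{S}-\lambda I_{2n})$, verify symplecticity by expanding $\hat{A}J\hat{A}^T$ and collapsing to $BX(R-R^T+RJ_2R^T)X^TB^T=0$ via the identity $RJ_2R^T=R^TJ_2R$, and finally rewrite using $X^TJ^TS = \Lambda^{-1}X^TJ^T$. Your remark about the $J$ versus $J^T$ discrepancy is well spotted: the paper's own derivation in \eqref{equ:pencil_rewritten} also arrives at $X^TJ^T$, so the $X^TJ$ in the theorem statement \eqref{equ:symp_pencil_change} appears to be a typo rather than something to be reconciled.
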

Regarding $\hat{P}(\lambda)$ in \eqref{equ:hat_P}, let $\hat{P}(\lambda) = \hat{A} - \lambda \hat{B}$ with $\hat{A} = A + BXRX^TJ^TB^{-1}A$ and $\hat{B} = B$. Then certainly $\Vert \hat{B} - B \Vert / \Vert B \Vert = 0$ while
$$ \frac{\Vert \hat{A} - A \Vert}{\Vert A \Vert} \leq \kappa(B) \Vert R \Vert \Vert X \Vert^2,$$
where $\kappa(B) = \Vert B \Vert \Vert B^{-1} \Vert$ is the condition number of the matrix $B$. Thus, using Theorem \ref{thm:coarse_bound} we may immediately bound $\Vert \hat{A} -A \Vert/ \Vert A \Vert$. Furthermore, from the form of $\hat{P}(\lambda)$ in \eqref{equ:symp_pencil_change}, statements similar to those in Section \ref{sec:eigenvectors} can directly be derived.

\section{Summary}
In this work we showed how to modify a pair of eigenvalues $\lambda, 1/\lambda$ of a symplectic matrix $S$ to desired target values $\mu, 1/\mu$ for a symplecitc matrix $\hat{S}$ in a structure-preserving way. Universal
bounds on the relative distance between $S$ and $\hat{S}$ with modified spectrum were given. The eigenvalues
Segre characteristics of $S$ were related to those of $S$ and some statements on eigenvalue condition numbers have been derived. The main results have been extended to matrix pencils.

\section{Acknowledgement}
The author is grateful to Thomas Richter as this work was in parts developed during the authors employment in Thomas Richter's group at the Otto-von-Guericke-Universit{\"a}t Magdeburg.

\newpage

\end{document}